\newtheorem{theorem}{Theorem}
\newtheorem{corollary}[theorem]{Corollary}
\newtheorem{lemma}[theorem]{Lemma}
\newtheorem{definition}[theorem]{Definition}
\newtheorem{question}{Question}
\newtheorem{proposition}[theorem]{Proposition}
\begin{document}
\def\F{{\mathbb F}}
\title{ A new  formula for   Lazard's correspondence for finite  braces and pre-Lie algebras }

\author{Agata Smoktunowicz}
\date{}
\maketitle
\begin{abstract} In this paper a  simple algebraic formula is obtained for the correspondence between finite right nilpotent ${\mathbb F}_{p}$-braces and finite nilpotent pre-Lie algebras. This correspondence  agrees with the correspondence  using Lazard's correspondence   between   finite $\mathbb F_{p}$-braces and pre-Lie algebras  proposed  by Wolfgang  Rump in 2014.  
 As an application example, a classification  of  all 
 right nilpotent ${\mathbb F}_{p}$-braces generated by one element of cardinality $p^{4}$ is obtained. It is also  shown that the sum of  a finite number of left nilpotent ideals in a left brace is a left nilpotent ideal, therefore every finite brace contains the largest left nilpotent ideal.
 \end{abstract}

   The motivation  for this paper is the  following assertion, made by Wolfgang Rump on page 141 of \cite{Rump} for finite right braces: Suppose that $G$ is the adjoint group of a brace $A$. The $1$-cocycle $G\rightarrow A$ would then lead to a complete 
 Right RSA struture of $\bf g$ via Lazard's correspondence.

  We provide a formal proof of this correspondence as it appears none have been published.
 This correspondence means we can use pre-Lie algebras to characterise finite braces of cardinality $p^{n}$, and construct examples of braces using purely algebraic methods,  instead of the more typical group theory-based methods or  by computer or hand calculations.  This can  be  used to characterise the  structure of any finite brace, since it was shown in \cite{RN}  that every finite brace is completely determined by its adjoint group and braces which are its Sylow's subgroups.  
  
 In \cite{Rump}, page 135,  Rump developed a  connection between left nilpotent $\mathbb R$-braces and  pre-Lie algebras over the field of real numbers. In the case of pre-Lie algebras over finite fields, this method can be applied to obtain braces from pre-Lie algebras for sufficiently large $p$ using  Lazard's correspondence. However  it is not immediately  clear how to obtain a pre-Lie algebra from every  brace. It is also  not  clear if every brace will be an image of some pre-Lie algebra under Lazard's corresponence. 
 Therefore  it is not immediately clear how to attach 
  a pre-Lie algebra in a reversible way to every  brace, although  it is clear how to assign pre-Lie algebras to braces which were  already obtained from pre-Lie algebras using  Lazard's correspondence in the  different direction.

 Here we show how to attach to every finite  right nilpotent ${\mathbb F}_{p}$-brace a  finite nilpotent   pre-Lie algebra over the field ${\mathbb F}_{p}$. We develop a simple algebraic formula for this  passage from
 braces to pre-Lie algebras. For the  passage the other way,  from finite pre-Lie algebras to finite braces, we  can use the method from page 135 \cite{Rump}. We then show that this correspondence is one-to-one. Moreover, the passages from braces to pre-Lie algebras and from pre-Lie algebras to braces are reversible by each other. Therefore our formulas, which at first glance do not resemble Lazard's correspondence, in fact  correspond to Lazard's correspondence applied to  multiplicative groups of braces and therefore agree with the original suggestion by Rump. Notice that adjoint groups of braces are the multiplicative groups of braces under the operation $\circ $. 
  This provides us with easy-to-use formulas for  one-to-one correspondence between right nilpotent ${\mathbb F}_{p}$-braces and right nilpotent pre-Lie algebras over ${\mathbb F}_{p}$ of nilpotency index $k$ for $p>2^{k}$.

 As an example application we describe right  nilpotent  ${\mathbb F}_{p}$-braces of cardinality $p^{4}$ for $p>64$, which are generated by one element as braces. Some related results can be found in \cite{Cedo}.

 For information about Lazard's correspondence we refer the reader to \cite{K} and the references therein. The following questions  related to Lazard's correspondence and the  theory of braces remain unanswered: 
\begin{question}
 Let $A$ be an $\mathbb F_{p}$-brace of cardinality $p^{k}$ for some $k$. Does it follow that, for a sufficiently large $p$, $A$ with  the same additive operation $+$ and with operation $\cdot $ defined as 
\[a\cdot b=-\sum_{i=0}^{p-2}{\frac 1{2^{i}}}((2^{i}a)* b)\]
  is a pre-Lie algebra?
\end{question}
  \begin{question} Let $k$ be a natural number, and let $p$ be a prime number such that $p>2^{k}$. 
 Is there a bijective correspondence between  $\mathbb F_{p}$-braces of cardinality $p^{k}$
 and left nilpotent  pre-Lie algebras  over ${\mathbb F}_{p}$  of cardinality $p^{k}$ ? 
\end{question}

 The correspondence betwen groups with bijective $1$-cocycles and braces is mentioned in Theorem 2.1 in  \cite{Rio}.
  To construct braces from finite left nilpotent pre-Lie algebras following ideas from \cite{rump} it is possible to  use Lazard's correspondence for the corresponding Lie-algebra and then  construct a 1-cocycle on this group   $a\rightarrow p_{a}$ where $p_{a}(b)= e^{L_{a}}(b)$ (as on  page 135 in \cite{rump}).

  Notice that the relationship between braces, pre-Lie algebras, Rump's suggestions  and Lazard's correspondence  was subsequently  investigated in Section 3 \cite{DB}, where interesting related results have been obtained. Moreover in \cite{DB} the correspondence between braces and Hopf-Galois extensions of abelian type was discovered.
 Braces have found application in several research areas, some of which we now mention. They form an important concept in Hopf-Galois extensions --  see \cite{DB, LC, TC, gv,  kayvan, SVB} for related results.  They have been shown to be equivalent  to several concepts in group theory, such as groups with bijective 1-cocycles, regular subgroups of the holomorph, matched pairs of groups, factorised groups  and Garside Groups \cite{Rump, DB, Catino, FC, gateva, SVB, p, Sysak}. It is known that two-sided braces are exactly 
 the Jacobson radical rings \cite{rump}, and \cite{cjo, ILau}. In \cite{doikou}, applications of braces in quantum integrable
 systems were investigated, and in \cite{Agatka1} R-matrices constructed from braces were studied.
 Solutions  of  the pentagon equation  related to braces have  been investigated by several
 authors \cite{pent}.    In \cite{TB}, Brzezi{\' n}ski showed that braces are  related to trusses. Simple braces were investigated in \cite{DB, djc, cedo}, and  cohomology of braces was investigated in \cite{LV}. An analogon of the  Artin-Wedderburn theorem for braces was obtained in  \cite{JespersLeandro}. Subsequently skew braces have been introduced in \cite{gv}. 

  Many authors have developed methods to describe finite braces of a given cardinality.
 In particular, all braces and skew braces of cardinality $p^{3}$ for all prime numbers $p$ were described by David  Bachiller \cite{Ba}  and Kayvan Zenouz \cite{kayvan}. The description of all braces of cardinality $p^{4}$ is still unknown, however there is an estimate of a number of $\mathbb F_{p}$-braces of cardinality $p^{n}$   obtained by Lindsay Childs in \cite{L}. Some related open questions were posed in \cite{V, Dietzel}.

 In this paper we describe right nilpotent  one-generator braces of cardinality $p^{4}$. Notice that right nilpotent  braces generated by one element  are in correspondence with indecomposable  involutive non-degenerate set-theoretic solutions of a finite multipermutation level \cite{Agatka1, RumpEdi}, with indecomposable  solutions given by $\{y*x+x:y\in A\}$ where $x$ is a generator of $A$.

  In Section 4, the connection between braces and pre-Lie algebras is  used to generalise some results  from the context of  pre-Lie algebras to braces. For example, it is shown   that the sum of  a finite number of left nilpotent ideals in a left brace is a left nilpotent ideal. This is an analogon of the pre-Lie algebra result obtained in \cite{CKM}. Therefore every finite brace contains the largest left nilpotent ideal. 
 \section{Background information}

 Recall that a   {\em pre-Lie algebra} $A$ is a vector space with a binary operation $(x, y) \rightarrow  xy$
satisfying
\[(xy)z -x(yz) = (yx)z - y(xz),\]

 for every $x,y,z\in A$. We say that a pre-Lie algebra $A$  is {\em  nilpotent}  if, for some $n\in \mathbb N$, all products of $n$ elements in $A$ are zero. We say that $A$ is left nilpotent if for some $n$, we have $a_{1}\cdot (a_{2}\cdot( a_{3}\cdot (\cdots  a_{n})\cdots ))=0$ for all  $a_{1}, a_{2}, \ldots , a_{n}\in A$.
 Pre-Lie algebras were introduced by Gerstenhaber, and independently by Vinberg.

Recall that a set $A$ with binary operations $+$ and $* $ is a {\em  left brace} if $(A, +)$ is an abelian group and the following version of distributivity combined with associativity holds:
  \[(a+b+a*b)* c=a* c+b* c+a* (b* c), \space  a* (b+c)=a* b+a* c,\]
for all $a, b, c\in A$,  moreover  $(A, \circ )$ is a group, where we define $a\circ b=a+b+a* b$.

In what follows, we will use the definition in terms of operation `$\circ $' presented in \cite{cjo} (see \cite{rump} for the original definition): a set $A$ with binary operations of addition $+$, and multiplication $\circ $ is a brace if $(A, +)$ is an abelian group, $(A, \circ )$ is a group and for every $a,b,c\in A$
\[a\circ (b+c)+a=a\circ b+a\circ c.\]
 Circle algebras related to braces were introduced by Catino and Rizzo in
  \cite{Catino}.   
We now recall Definition $2$ from \cite{Rump}, which we state  for left braces, as  it was originally stated for right braces.
 Notice that 
$\mathbb F$-braces are related to circle algebras.
\begin{definition} Let $\mathbb F$ be a field. We say that a left  brace $A$ is an  $\mathbb F$-brace  
if its additive group is an $\mathbb F$-vector space such that 
$a*({\alpha }b)={\alpha }(a* b)$ for all $a,b\in A,$
$ {\alpha }\in \mathbb F$. Here $ a*b=a \circ b -a -b$.
\end{definition}

In \cite{rump}, Rump introduced {\em left nilpotent}  and  {\em right nilpotent}  braces and radical chains $A^{i+1}=A*A^{i}$ and $A^{(i+1)}=A^{(i)}*A$  for a left brace $A$, where  $A=A^{1}=A^{(1)}$. Recall that a left brace $A$  is left nilpotent if  there is a number $n$ such that $A^{n}=0$, where inductively $A^{i}$ consists of sums of elements $a*b$ with
$a\in A, b\in A^{i-1}$. A left brace $A$ is right nilpotent   if  there is a number $n$ such that $A^{(n)}=0$, where $A^{(i)}$ consists of sums of elements $a*b$ with
$a\in A^{(i-1)}, b\in A$.
 Strongly nilpotent braces and the chain of ideals $A^{[i]}$ of a brace $A$ were defined in \cite{Engel}.
 Define $A^{[1]}=A$ and $A^{[i+1]}=\sum_{j=1}^{i}A^{[j]}*A^{[i+1-j]}$.  A left brace $A$ is {\em strongly nilpotent}  if  there is a number $n$ such that $A^{[n]}=0$, where $A^{[i]}$ consists of sums of elements $a*b$ with
$a\in A^{[j]}, b\in A^{[i-j]}$ for all $0<j<i$.   Various other radicals in braces were subsequently introduced, in analogy with ring theory and group theory, see \cite{djc, ksv, kinneart, JespersLeandro}. In this paper we introduce left nilpotent radical for finite braces, in analogy with pre-Lie algebras.

\section{ From finite braces to Pre-Lie algebras}

Let $p>0$ be a prime number. Let ${\mathbb F}_{p}$ denote the field of cardinality $p$. Let $B$ be a left brace with operations $+$ and $\circ $, and operation $*$ where $a*b=a\circ b-a-b$.

Let $C_{p}$ be a cyclic group of order $p$. Let $B$ be a finite left brace whose additive group is ${C}_{p}\times \cdots \times { C}_{p}$, then $B$ is an  ${\mathbb F}_{p}$-brace.
 On the other hand, the additive group of an  ${\mathbb F}_{p}$ -brace is abelian, and every element has order $p$, so it is isomorphic to a group  ${C}_{p}\times \cdots \times {C}_{p}$, and hence $B$ has cardinality $p^{n}$ for some $n$.

By a result of Rump \cite{rump}, every brace of order $p^{n}$ is left nilpotent. Assume that $B$ is also right nilpotent, then by a result from \cite{Engel} it is strongly nilpotent. In other words there is $k$ such that the product of any $k$ elements, in any order,  is zero (where all products are under the operation $*$).  If $B^{[k]}=0$ and $B^{[k-1]}\neq 0$, then we will say that $B$ is  strongly nilpotent of degree $k$ (or we will say  that $k$ is the nilpotency index of $B$).

We recall  Lemma 15 from \cite{Engel}:

\begin{lemma}\label{fajny}
 Let $s$ be a natural number and let $(A, +, \circ)$ be a left brace such that $A^{s}=0$ for some $s$.
 Let $a, b\in A$, and as usual define $a*b=a\circ b-a-b$.
Define inductively elements $d_{i}=d_{i}(a,b), e_{i}=e_{i}(a, b)$  as follows:
$d_{0}=a$, $e_{0}=b$, and for $1\leq i$ define $d_{i+1}=d_{i}+e_{i}$ and $e_{i+1}=d_{i}*e_{i}$.
 Then for every $c\in A$ we have
\[(a+b)*c=a*c+b*c+\sum _{i=0}^{2s} (-1)^{i+1}((d_{i}*e_{i})*c-d_{i}*(e_{i}*c)).\]
\end{lemma}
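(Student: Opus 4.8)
The plan is to fix $c \in A$ and introduce the left-distributivity defect $\phi(x,y) := (x+y)*c - x*c - y*c$ together with the associator $X(x,y) := (x*y)*c - x*(y*c)$. The quantity we must compute is $\phi(a,b)$, and the sum on the right-hand side of the claimed identity is precisely $\sum_{i=0}^{2s}(-1)^{i+1}X(d_i,d_i')$, so it is enough to prove $\phi(a,b) = \sum_{i=0}^{2s}(-1)^{i+1}X(d_i,d_i')$. First I would derive the recursion
\[\phi(x,y) = -X(x,y) - \phi(x+y,\, x*y)\]
from the brace axiom $(x+y+x*y)*c = x*c + y*c + x*(y*c)$: writing $x+y+x*y = (x+y)+(x*y)$ and expanding the left-hand side once through the definition of $\phi$ applied to the pair $(x+y,\, x*y)$ and once through $(x+y)*c = x*c + y*c + \phi(x,y)$, the terms $x*c$ and $y*c$ cancel against the right-hand side of the axiom and only the displayed identity survives.

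Since $d_{i+1} = d_i + d_i'$ and $d_{i+1}' = d_i * d_i'$ by construction, this recursion reads $\phi(d_i,d_i') = -X(d_i,d_i') - \phi(d_{i+1},d_{i+1}')$, and a one-line induction on $N$ then gives
\[\phi(d_0,d_0') = \sum_{i=0}^{N-1}(-1)^{i+1}X(d_i,d_i') + (-1)^{N}\phi(d_N,d_N').\]
Next I would show by induction on $i$ that $d_i' \in A^{i+1}$: indeed $d_0' = b \in A^1$, and if $d_i' \in A^{i+1}$ then $d_{i+1}' = d_i * d_i' \in A*A^{i+1} = A^{i+2}$ because $d_i \in A$. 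As $A^s = 0$ forces $A^j = 0$ for every $j \ge s$, we obtain $d_i' = 0$ whenever $i \ge s-1$; from that index on $d_{i+1} = d_i$, and, using the elementary facts $x*0 = 0$ and $0*c = 0$ (consequences of $a*(b+c)=a*b+a*c$ and of $0$ being the identity of $(A,\circ)$), both $X(d_i,d_i')$ and $\phi(d_i,d_i')$ vanish for $i \ge s-1$. Taking $N = 2s+1$ in the iterated recursion, the tail term $(-1)^{2s+1}\phi(d_{2s+1},d_{2s+1}')$ is zero and every summand with index $\ge s-1$ is zero, so $\phi(d_0,d_0') = \sum_{i=0}^{2s}(-1)^{i+1}X(d_i,d_i')$; unwinding the definitions of $\phi$ and $X$ is exactly the assertion of the lemma.

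I expect the only genuinely load-bearing step to be the filtration estimate $d_i' \in A^{i+1}$, since it is what makes the a priori non-terminating telescoping collapse to a finite sum and is the only place where the hypothesis $A^s = 0$ enters. One should be careful that this estimate uses only the left radical chain $A^{i+1} = A * A^i$ and no two-sidedness of the $A^i$; beyond this, the argument is a formal manipulation of the two defining identities of a left brace, so I do not anticipate any serious difficulty.
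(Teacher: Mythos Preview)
Your argument is correct. The paper itself does not prove this lemma: it is merely recalled as ``Lemma~15 from \cite{Engel}'' and stated without proof, so there is no in-paper argument to compare against. Your telescoping derivation via the defect $\phi(x,y)=(x+y)*c-x*c-y*c$ and the associator $X(x,y)=(x*y)*c-x*(y*c)$, together with the recursion $\phi(x,y)=-X(x,y)-\phi(x+y,x*y)$ and the filtration estimate $d_i'\in A^{i+1}$, is precisely the natural proof one would expect; it is essentially the argument in the cited reference. Note also that the sum could have been truncated at $i=s-1$ rather than $i=2s$, as you correctly observe; the statement simply uses a generous upper bound.
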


{\bf Notation 1.} Let $A$ be a strongly nilpotent brace with operations $+, \circ , *$ defined as usual, so $x\circ y=x+y+x*y$
  for $x, y, z\in A$,  and 
let $E(x, y, z)\subseteq A$ denote the set consisting of any product of elements $x$ and $y$ and one element $z$ at the end of each product under the operation $*$,  in any order, with any distribution of brackets, each product consisting of at least 2 elements from the set $\{x,y\}$, each product having $x$ and $y$ appear at least once,  and having element $z$ at the end.  Moreover we only consider products of less than $k$ elements from the set $\{x,y,z\}$, where $k$ is the nilpotency index of $A$ (products of $k$ or more elements  are zero in $A$). Let $V_{x,y,z}$ be  a vector obtained from products of elements $x, y, z$ arranged in a such way that shorter products of elements  are situated  before longer products.

$ $

Below we associate to every strongly nilpotent  brace a pre-Lie algebra which is also strongly nilpotent and which has the same additive group.
 
\begin{proposition}\label{12345}
Let $A$ be an ${\mathbb F}_{p}$-brace which is strongly nilpotent of degree $k$.  Let $p$ be a prime number such that  
$2^{k}<p$.  As usual, the operations on $A$ are $+$, $\circ $  and $*$ where $a*b=a\circ b-a-b$. Define the binary operation $\cdot $ on $A$ as follows 
\[a\cdot b=\sum_{i=0}^{p-2}{\frac 1{2^{i}}}((2^{i}a)* b),\]
    for $a, b\in A$, where $2^{i}a$ denotes the sum of $2^{i}$ copies of element $a$, and $2^{-i}$ denotes the inverse of $2^{i}$ in ${\mathbb F}_{p}$.
Then $(a+b)\cdot c=a\cdot c+b\cdot c$ for every $a, b, c\in A$. 
Moreover $a\cdot (b+c)=a\cdot b+a\cdot c$ for every $a,b,c\in A$.
\end{proposition}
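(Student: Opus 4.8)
The second assertion (left distributivity) is immediate, and I would dispose of it in one line: $*$ is additive in its right argument (this is one of the brace axioms, $x*(y+z)=x*y+x*z$), so $(2^{i}a)*(b+c)=(2^{i}a)*b+(2^{i}a)*c$ for every $i$, and summing these identities weighted by $2^{-i}$ yields $a\cdot(b+c)=a\cdot b+a\cdot c$. The rest of the work is the first assertion, right distributivity.

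The first step is to give an intrinsic description of $\cdot$. I would show that, for fixed $a,b\in A$, the map $n\mapsto (na)*b$ (where $na$ denotes the $n$-fold additive sum) agrees with a polynomial in $n$ of degree $\le k-2$, with coefficients in $A$ and zero constant term. The mechanism is that the $m$-th finite difference $G_m(a,b):=\sum_{l=0}^{m}(-1)^{m-l}\binom{m}{l}\bigl((la)*b\bigr)$ lies in $A^{[m+1]}$; this is proved by induction on $m$, the inductive step using Lemma~\ref{fajny} (which rewrites $(x+y)*c$ as $x*c+y*c$ plus an explicit finite sum of $*$-products each built from $x,y,c$ with strictly more factors) together with $A^{[i]}*A^{[j]}\subseteq A^{[i+j]}$. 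Since $A$ is strongly nilpotent of degree $k$, $A^{[k]}=0$, so $G_m(a,b)=0$ for $m\ge k-1$ and the Newton forward-difference formula gives $(na)*b=\sum_{m=1}^{k-2}\binom{n}{m}G_m(a,b)$. Substituting this into the definition, $a\cdot b=\sum_{m=1}^{k-2}G_m(a,b)\sum_{i=0}^{p-2}2^{-i}\binom{2^{i}}{m}$, and writing $\binom{x}{m}=\tfrac{(-1)^{m-1}}{m}x+(\text{higher powers of }x)$ turns the inner sum into a $\mathbb{Z}$-combination of geometric sums $\sum_{i=0}^{p-2}(2^{l-1})^{i}$ with $1\le l\le m\le k-2$. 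By Fermat's little theorem each such sum is $-1$ when $2^{l-1}\equiv1\pmod p$ and $0$ otherwise; and the hypothesis $2^{k}<p$ forces the multiplicative order of $2$ modulo $p$ to exceed $k$, so only the $l=1$ term survives. This collapses the inner sum to $\tfrac{(-1)^{m}}{m}$, and hence
\[
a\cdot b=\sum_{m=1}^{k-2}\frac{(-1)^{m}}{m}\,G_m(a,b)=-\bigl(\text{coefficient of }n\text{ in the polynomial }n\mapsto(na)*b\bigr).
\]
This is the finite-characteristic substitute for Rump's prescription of differentiating $*$ in its first variable, and it is what I would carry forward.

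Now for right distributivity. By Lemma~\ref{fajny}, $(na+nb)*c=(na)*c+(nb)*c+\Delta(na,nb,c)$, where $\Delta(x,y,c)=\sum_{j\ge0}(-1)^{j+1}\bigl((d_{j}*d_{j}')*c-d_{j}*(d_{j}'*c)\bigr)$ with $d_{0}=x$, $d_{0}'=y$, $d_{j+1}=d_{j}+d_{j}'$, $d_{j+1}'=d_{j}*d_{j}'$. Comparing coefficients of $n$ on both sides and using the displayed identity, $(a+b)\cdot c-a\cdot c-b\cdot c$ is, up to an overall sign, the coefficient of $n$ in the polynomial $n\mapsto\Delta(na,nb,c)$; its constant term is $0$ since $\Delta(0,0,c)=0$, so it suffices to prove that its linear term vanishes. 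Here I would use the $\mathbb{F}_{p}$-brace axiom to move scalars through the second argument of $*$: as polynomials in $n$, $d_{0}*d_{0}'=(na)*(nb)=n\cdot\bigl((na)*b\bigr)$ is divisible by $n^{2}$ (because $(na)*b$ is divisible by $n$), and an easy induction gives that $d_{j}$ is divisible by $n$ and $d_{j}'$ by $n^{2}$ for every $j\ge1$. Combined with the routine (filtration plus polynomial-degree) fact that $R(n)*c$ is divisible by $n^{2}$ whenever the $A$-valued polynomial $R$ is, this shows every summand $(d_{j}*d_{j}')*c-d_{j}*(d_{j}'*c)$ of $\Delta(na,nb,c)$ is divisible by $n^{2}$. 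Hence $n\mapsto\Delta(na,nb,c)$ has no linear term, and therefore $(a+b)\cdot c=a\cdot c+b\cdot c$.

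I expect the main obstacle to be avoiding a circular argument in the last step: since $a\cdot c$ has just been identified with (minus) a linear coefficient, it is tempting to evaluate the linear term of $\Delta(na,nb,c)$ by appealing to the definition of $\cdot$, which merely restates the claim. The vanishing of that linear term must instead be extracted from the explicit shape of $\Delta$ in Lemma~\ref{fajny}, i.e.\ from the fact that $\Delta$ is assembled from $*$-products carrying an extra factor --- precisely the divisibility-by-$n^{2}$ estimate. The remaining ingredients (polynomiality of $n\mapsto(na)*b$ and the order-of-vanishing behaviour of $*$ on $A$-valued polynomials) are routine but rely essentially on strong nilpotency, which keeps the sums in Lemma~\ref{fajny} and in $\Delta$ finite; the hypothesis $2^{k}<p$ is used at exactly one point, to make the geometric sums over $i$ collapse correctly.
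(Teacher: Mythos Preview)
Your proof is correct and takes a genuinely different route from the paper's. The paper never identifies $a\cdot b$ with a linear coefficient; instead it writes the correction term $C(n)=(2^{n}a+2^{n}b)*c-(2^{n}a)*c-(2^{n}b)*c$ as $V^{T}M^{n}V_{a,b,c}$ for a fixed upper–triangular matrix $M$ with diagonal entries $2^{i}$ ($2\le i<k$), and then kills $\sum_{n=0}^{p-2}2^{-n}C(n)$ by a geometric–series identity together with $(\tfrac{1}{2}M)^{p-1}V_{a,b,c}=V_{a,b,c}$. Your reinterpretation $a\cdot b=-(\text{coefficient of }n\text{ in }(na)*b)$ is a neat discrete substitute for Rump's differentiation and makes the structure of the argument transparent: right distributivity becomes the statement that $\Delta(na,nb,c)$ has no $n^{1}$–term. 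What the paper's route buys is that it is entirely self–contained from Lemma~\ref{fajny} and uses only the single rescaling by $2$; what yours buys is a cleaner conceptual picture that would adapt to any choice of scaling, and an explicit closed form for $a\cdot b$ in terms of finite differences.

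The one place I would push back is the ``routine'' fact that $R(n)*c$ is divisible by $n^{2}$ whenever the $A$–valued polynomial $R$ is. This is true, but it is not immediate: $*$ is not linear in its left argument, so one cannot simply distribute over $R(n)=\sum_{l\ge 2}n^{l}r_{l}$. A clean way to justify it is reverse induction on $s$ with $R(n)\in A^{[s]}$: for $s\ge k-1$ one has $R*c=0$; for smaller $s$, compare $R(n)*c$ with $\sum_{l}(n^{l}r_{l})*c$ (each summand has order $\ge l\ge 2$ in $n$, since $\binom{n^{l}}{m}$ has lowest term $n^{l}$) and observe via Lemma~\ref{fajny} that the discrepancy is a combination of words with at least two factors from $\{n^{l}r_{l}\}\subseteq A^{[s]}$, hence lies in $A^{[2s+1]}\subseteq A^{[s+1]}$ and is handled by the inductive hypothesis. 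This is exactly the content hidden in the paper's matrix $M$ (the diagonal entries $2^{i}$ with $i\ge 2$ encode the $n^{2}$–divisibility), so your ``routine'' step and the paper's eigenvalue computation are doing the same work in different language. With that step filled in, your argument is complete.
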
 
\begin{proof} 
By the definition of a left ${\mathbb F}_{p}$-brace, we immediately get that  $a\cdot ( b+ c)=a\cdot b+ a\cdot c$. We will show that 
$(a+b)\cdot c=a\cdot c+b\cdot c$ for $a, b, c\in A$. 
 Observe that    \[(a+b)\cdot c= \sum_{i=0}^{p-2}{\frac 1{2^{i}}}((2^{i}a+2^{i}b)* c).\]
  Lemma \ref{fajny} applied several times yields 
\[{\frac 1{2^{n}}} (2^{n} a+{2^{n}}b)* c=
 {\frac 1{2^{n}}} (2^{n} a)*c+{\frac 1{2^{n}}}({2^{n}}b)*c+ {\frac 1{2^{n}}}C(n),\] 
 where $C(n)$ is a sum of some products of elements $2^{n}a$ and $2^{n}b$ and an element $c$ at the end
(because $A$ is a strongly nilpotent brace). Moreover, each product has at last one occurrence of element  ${2^{n}}a$ and also at last one occurrence of element  ${2^{n}}b$ and an element $c$ at the end.

 To show that  $(a+b)\cdot c =a\cdot c+b\cdot c$  it suffices to prove that \[\sum_{i=0}^{p-2}{\frac 1{2^{n}}}C(n)=0.\]
  We may consider a vector $V_{{\frac 1{2^{n}}}a, {\frac 1{2^{n}}}b, c}$ obtained as in Notation $1$ from products of elements ${\frac 1{2^{n}}}a$, ${\frac 1{2^{n}}}b$, $c$. 

By Lemma \ref{fajny} (applied several times) every element from the set $E(2x, 2y, z)$ can be written as a linear combination 
   of elements from $E(x, y, z)$, with coefficients which do not  depend on $x, y$ and $z$. We can assume that these coefficients belong to $\mathbb F_{p}$ since $pa=0$ for every $a\in A$    (we  use integers to represent elements of $\mathbb F_{p}$ in the modular arithmetic). We can then  organize these coefficients in a matrix, which we will call $M=\{m_{i,j}\}$,  so that we obtain
\[MV_{x,y, z}=V_{2x,2y, z}.\]

Notice that  elements from $E(x,y, z)$ (and from $E(2x, 2y, z)$) which are shorter appear before elements which are longer in our vectors $V_{x,y, z}$ and $V_{2x, 2y, z}$. Therefore by Lemma \ref{fajny} it follows that $M$ is an  upper triangular matrix.

Observe that the first four elements in the  vector  
 $V_{2x,2y, z}$ are \[(2x)*((2y)*z), ((2x)*(2y))*z, (2y)*((2x)*z), ((2y)*(2x))*z\] (arranged in some order). We can assume that $(2x)*((2y)*z)$ is the first entry in the vector $V_{2x, 2y, z}$ (so $x*(y*z)$ is the first entry in the vector $V_{x, y, z}$).  
 By Lemma \ref{fajny} applied several times,  $(2x)*((2y)*z)$ can be written as $4(x*(y*z))$ plus  elements  from $E(x,y,z)$ of degree larger than $3$ (so these elements are products of more than three elements from the set $\{x,y,z\}$). It follows that the first diagonal entry in $M$ equals $4$, so $m_{1, 1}=4$. 
Observe that the following diagonal entries can be calculated similarly, 
 for example, $(2x)*((2x)*((2x)*y))$ can be written using Lemma \ref{fajny} as $8(x*(x*(x*y)))$ plus  elements of degree larger than $4$.

Therefore $M$ is an upper triangular matrix with all diagonal entries of the form $2^{i}$, where $1<i< k$, where $k$ is the nilpotence index of our brace. It follows that all diagonal entries of the matrix  ${\frac 12}M-I$ are coprime with $p$  (since $2^{k}<p$), consequently 
  ${\frac 12}M-I$ is a non-singular matrix (where $I$ is the identity matrix), so it is an invertible matrix.

Notice that $M$ does not depend on $x, y$ and $z$, as we only used relations from Lemma \ref{fajny} to construct it. 
It follows that for every $n$, $M^{n}V_{x,y, z}=V_{2^{n}x,2^{n}y, z}$, therefore 
\[{\frac 1{2^{n}}}V_{2^{n}x, 2^{n}y, z}=({\frac 12}M)^{n}V_{x, y, z}.\]

Observe that $2^{p-1}x=x$ and $2^{p-1}y=y$ because  $2^{p-1}=1$ in ${\mathbb F}_{p}$.
 Therefore 
\[V_{x, y, z}={\frac 1{2^{p-1}}}V_{2^{p-1}x, 2^{p-1}y, z}=({\frac 12}M)^{p-1}V_{x, y, z}.\]

Notice that there is a vector $V$ with entries in ${\mathbb F}_{p}$ such that 
\[ C(n)=V^{T}V_{2^{n}x, 2^{n}y,z}=V^{T}M^{n}V_{x,y, z}\] for each $n$, where $V^{T}$ is the transposition of $V$. 

 Denote $M^{0}=I$ the identity matrix. Now we calculate

\[\sum_{n=0}^{p-2}{\frac 1{2^{n}}}C(n)=V^{T}(\sum_{n=0}^{p-2}{\frac 1{2^{n}}}M^{n}V_{x,y,z}).\]

Notice that $\sum_{n=0}^{p-2}{\frac 1{2^{n}}}M^{n}=(I-{\frac 12}M)^{-1}(I-({\frac 12}M)^{p-1}),$
 therefore 
\[\sum_{n=0}^{p-2}{\frac 1{2^{n}}}C(n)=V^{T}(I-{\frac 12}M)^{-1}((I-({\frac 12}M)^{p-1})V_{x,y,z})=0.\]
This concludes the proof. 
\end{proof} 

$ $
 \begin{corollary} \label{mai}
  Let notation be as in Proposition \ref{12345}, then $(\alpha a+\beta b)\cdot c=\alpha (a\cdot c)+\beta (b\cdot c)$ 
 for all $\alpha , \beta \in {\mathbb F}_{p}$.
\end{corollary}
\begin{proof}
Notice that $(na)\cdot b=n(a\cdot b)$ by Proposition \ref{12345}. Therefore, for $0\neq n\in {\mathbb F}_{p}$ and $d=na$ we have   
 $(d\cdot b)=n({\frac 1n}d)\cdot b.$ Therefore, $({\frac mn}a)\cdot b=m({\frac 1m}a)\cdot b={\frac mn}(a\cdot b)$. 
\end{proof}

 We will now prove the main result of this section.

\begin{theorem}\label{main}
  Let $A$ be an ${\mathbb F}_{p}$-brace which is strongly nilpotent of degree $k$. Assume that 
$2^{k}<p$.  As usual, the operations on $A$ are $+$, $\circ $  and $*$, where $a*b=a\circ b-a-b$. Define the binary operation $\cdot $ on $A$ as follows 
\[a\cdot b=\sum_{i=0}^{p-2}{\frac 1{2^{i}}}((2^{i}a)* b),\]
    for $a, b\in A$, where $2^{i}a$ denotes the sum of $2^{i}$ copies of element $a$, and $2^{-i}$ denotes the inverse of $2^{i}$ in ${\mathbb F}_{p}$.
Then \[(a\cdot b)\cdot c-a\cdot (b \cdot c)=(b\cdot a)\cdot c-(b\cdot a) \cdot c     \] for every $a, b, c\in A$. 
\end{theorem}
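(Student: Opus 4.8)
\noindent The right‑hand side of the displayed formula evidently contains a typographical slip; the intended conclusion is the left‑symmetry (pre‑Lie) identity $(a\cdot b)\cdot c-a\cdot(b\cdot c)=(b\cdot a)\cdot c-b\cdot(a\cdot c)$. The plan is to reduce this to a statement about the product $*$ alone and then deduce that statement from the brace axioms. To begin, by Proposition \ref{12345} the operation $\cdot$ is additive in each variable; since $A$ is an $\mathbb{F}_{p}$‑brace and $a*b$ is $\mathbb{F}_{p}$‑linear in $b$, the definition of $\cdot$ shows that $\cdot$ is in fact $\mathbb{F}_{p}$‑bilinear. Hence the associator defect
\[f(a,b,c):=\big((a\cdot b)\cdot c-a\cdot(b\cdot c)\big)-\big((b\cdot a)\cdot c-b\cdot(a\cdot c)\big)\]
is $\mathbb{F}_{p}$‑trilinear in $(a,b,c)$, and it suffices to show $f\equiv 0$.

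Next I would compute $f$ explicitly. Expanding the definition of $\cdot$ and applying Lemma \ref{fajny} repeatedly, just as in the proof of Proposition \ref{12345}, $f(a,b,c)$ is an $\mathbb{F}_{p}$‑combination of iterated $*$‑products of copies of $a$, copies of $b$, and a single $c$ at the right‑hand end of each product. Since $f$ is linear in each variable and, because $2^{k}<p$, every nonzero product in $A$ has fewer than $p$ factors, each product occurring in $f$ contains $a$ and $b$ exactly once; thus $f(a,b,c)$ is a combination of $(a*b)*c$, $(b*a)*c$, $a*(b*c)$, $b*(a*c)$ only. Reading off the coefficients from the weight‑$3$ part — using $a\cdot b\equiv-(a*b)$ modulo products of weight $\ge 3$ (because $\sum_{i=0}^{p-2}2^{-i}2^{i}=p-1=-1$ in $\mathbb{F}_{p}$), the rule $a*(-y)=-(a*y)$, and the fact that all correction terms in Lemma \ref{fajny} strictly raise the weight — one finds that the weight‑$3$ part of $(a\cdot b)\cdot c$ is $(a*b)*c$ and that of $a\cdot(b\cdot c)$ is $a*(b*c)$, so that
\[f(a,b,c)=(a*b)*c-a*(b*c)-(b*a)*c+b*(a*c).\]

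The heart of the proof is to show this expression vanishes, that is, that $*$ satisfies the pre‑Lie identity up to terms of higher weight. From the brace axiom $(x\circ y)*z=x*z+y*z+x*(y*z)$ one obtains the exact identity $a*(b*c)-b*(a*c)=(a\circ b)*c-(b\circ a)*c$. Writing $a\circ b=(a+b)+(a*b)$ and $b\circ a=(a+b)+(b*a)$ and using Lemma \ref{fajny} together with $A^{[i]}*A^{[j]}\subseteq A^{[i+j]}$, one sees that $(a\circ b)*c\equiv (a+b)*c+(a*b)*c$ and $(b\circ a)*c\equiv (a+b)*c+(b*a)*c$ modulo $A^{[4]}$, since every correction term lands in $A^{[2+1+1]}=A^{[4]}$. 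Subtracting gives $(a\circ b)*c-(b\circ a)*c\equiv (a*b)*c-(b*a)*c\pmod{A^{[4]}}$, whence $f(a,b,c)\in A^{[4]}$. But $f(a,b,c)$ is a combination of products of just three elements of $A$, and such an element, if it lies in $A^{[4]}$, must be zero: this is clear after passing to the free strongly nilpotent brace on three generators, which is graded by the number of factors in a product, so that there $A^{[4]}$ is spanned by products of weight $\ge 4$ and has zero intersection with the weight‑$3$ subspace; the vanishing then transfers back to $A$. This gives $f\equiv 0$.

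The recurrent technical obstacle is the failure of right‑distributivity of $*$ — the very reason Lemma \ref{fajny} exists — which forces one to carry correction terms through every step. Steps one and two are essentially a repackaging of Proposition \ref{12345}; the genuinely new content is the last step, where the brace axioms (and not merely the vector‑space structure) are used to show, in effect, that the degree‑one part of the associated graded of $A$ is a pre‑Lie algebra. The one delicate point there is the final passage ``a combination of weight‑$3$ products lying in $A^{[4]}$ is zero'', which is transparent in the free graded model and is closely related to results on strongly nilpotent braces in \cite{Engel}.
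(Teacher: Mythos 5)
You are right that the stated conclusion has a typo and that Proposition~\ref{12345} makes $\cdot$ an $\mathbb{F}_p$-bilinear operation, so that it suffices to kill the associator defect $f(a,b,c)$. But your reduction of $f$ to the single four-term expression $(a*b)*c-a*(b*c)-(b*a)*c+b*(a*c)$, and the subsequent claim that this expression vanishes, both fail. First, trilinearity of $f$ does not allow you to discard the $*$-words in which $a$ or $b$ occurs more than once. Because $*$ is not right-distributive, a word containing $m\geq 2$ copies of $a$ is not a degree-$m$ homogeneous function of $a$: iterating Lemma~\ref{fajny} gives $(\lambda a)*y=\lambda(a*y)+\binom{\lambda}{2}\bigl(a*(a*y)-(a*a)*y\bigr)+\cdots$, and the coefficients $\binom{\lambda}{j}$ have a nonzero linear term in $\lambda$. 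Hence the genuine multidegree-$(1,1,1)$ component of $f$ (which trilinearity does identify with $f$ itself) receives contributions from the higher-multiplicity words; it is not the four-term combination you extracted. Second, that four-term combination is generically nonzero: by the identity the paper derives from Lemma~\ref{fajny} it equals $d(b,a,c)-d(a,b,c)$, a sum of products of weight at least $4$, and this is a nonzero element of $A^{[4]}$ in a general strongly nilpotent brace of class $\geq 5$ --- only the associated graded of $A$, not $A$ itself, satisfies the pre-Lie identity (cf.\ \cite{Iyudu}). Your closing step, that a combination of weight-$3$ products lying in $A^{[4]}$ must be zero because the free strongly nilpotent brace is ``graded by the number of factors,'' is therefore false: the free brace is not graded, precisely because the defining identity $(a+b+a*b)*c=a*c+b*c+a*(b*c)$ mixes degrees. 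If that step were valid, $*$ itself would be pre-Lie and the averaging $\sum_i 2^{-i}(2^ia)*b$ would be unnecessary.

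The mechanism that actually closes the argument --- and is the paper's --- keeps the correction terms: from $x*(y*z)-(x*y)*z-y*(x*z)+(y*x)*z=d(y,x,z)-d(x,y,z)$ applied with $x=2^ia$, $y=2^jb$, $z=c$ and summed against $2^{-i-j}$, one obtains $f(a,b,c)=\sum_{i,j}2^{-i-j}\bigl(d(2^ia,2^jb,c)-d(2^jb,2^ia,c)\bigr)$, and one then proves that the geometric averaging $\sum_{i=0}^{p-2}2^{-i}(\cdot)\big|_{a\mapsto 2^ia}$ annihilates every expression in which $a$ (or $b$) occurs with multiplicity at least $2$; this is the upper-triangular-matrix and geometric-series computation, and it is where the hypothesis $2^k<p$ is used. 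Your multilinearization instinct is exactly this annihilation, but it must be applied to the $d$-terms through the bookkeeping of Lemma~\ref{fajny} rather than invoked to discard them at the outset.
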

\begin{proof}  By Lemma \ref{fajny} applied several times we get 
\[(x+y)*z=x*z+y*z +x*(y*z)-(x*y)*z +d(x,y,z),\]
\[ (y+x)*z=x*z+y*z+y*(x*z)-(y*x)*z +d(y,x, z),\]
where $d(x, y, z)=E^{T}V_{x, y, z}$ for some vector $E$ with entries in ${\mathbb F}_{p}$ which does not depend of $x, y, z$, and where $V_{x, y, z}$ is as in Notation $1$ (moreover $d(x, y, z)$ is a combination of elements with at least 3 occurences of elements from the set $\{x, y\}$). 
 It follows that 
\[x*(y*z)-(x*y)*z-y*(x*z)+(y*x)*z=d(y, x, z)-d(x, y, z).\]

Let $a,b,c\in A$ and let $i, j$ be natural numbers.
  Applying it to $x=2^{i}a$, $y=2^{j}b$, $z=c$ we get 
\[(2^{i}a)*((2^{j}b)*c)-((2^{i}a)*(2^{j}b))*c+d(2^{i}a, 2^{j}b,c)=\]
\[ =(2^{j}b)*((2^{i}a)*c)-((2^{j}b)*(2^{i}a))*c+d(2^{j}b,  2^{i}a, c).\]

Notice that  
\[a\cdot (b\cdot c)=a\cdot \sum_{j=0}^{p-2}{\frac 1{2^{j}}}((2^{j}b)* c)=\sum_{i=0}^{p-2}{\frac 1{2^{i}}}((2^{i}a)* 
\sum_{j=0}^{p-2}{\frac 1{2^{j}}}((2^{j}b)* c)).\]

 Consequently, 
\[a\cdot (b\cdot c)=\sum_{i, j=0}^{p-2}{\frac 1{2^{i+j}}}((2^{i}a)*((2^{j}b)* c)).\]

On the other hand 

\[(a\cdot b)\cdot c=(\sum_{i=0}^{p-2}{\frac 1{2^{i}}}((2^{i}a)* b))\cdot c=\sum_{i=0}^{p-2}{\frac 1{2^{i}}}(((2^{i}a)* b)\cdot c),\]
 where the last equation follows from Proposition \ref{12345}. 

Consequently, 

\[(a\cdot b)\cdot c=\sum_{i,j=0}^{p-2}{\frac 1{2^{i+j}}}((2^{j}((2^{i}a)* b))* c)=\sum_{i,j=0}^{p-2}{\frac 1{2^{i+j}}}((2^{i}a)* (2^{j}b))* c,\]

 Recall a previous equation, multiplied by ${\frac 1{2^{i+j}}}$ on both sides: 

\[{\frac 1{2^{i+j}}}(2^{i}a)*((2^{j}b)*c)-{\frac 1{2^{i+j}}}((2^{i}a)*(2^{j}b))*c+{\frac 1{2^{i+j}}}d(2^{i}a, 2^{j}b,c)=\]
\[ ={\frac 1{2^{i+j}}}(2^{j}b)*((2^{i}a)*c)-{\frac 1{2^{i+j}}}((2^{j}b)*(2^{i}a))*c+{\frac 1{2^{i+j}}}d(2^{j}b,  2^{i}a, c).\]

 By summing the above equation for all $0\leq i,j\leq p-2$ and subtracting the previous equations we obtain that

\[(a\cdot b)\cdot c-a\cdot (b\cdot c)+\sum_{i,j=0}^{p-2}{\frac 1{2^{i+j}}}d(2^{i}a, 2^{j}b,c)=\]
\[(b\cdot a)\cdot c-b\cdot (a\cdot c)+\sum_{i,j=0}^{p-2}{\frac 1{2^{i+j}}}d( 2^{j}b,2^{i}a, c).\]

 So it remains to show that \[\sum_{i,j=0}^{p-2}{\frac 1{2^{i+j}}}d(2^{i}a, 2^{j}b,c)=0,\] for all $a, b, c\in A$ 
 (and hence $\sum_{i,j=0}^{p-2}{\frac 1{2^{i+j}}}d( 2^{j}b,2^{i}a, c)=0$).

{\em Proof that  $\sum_{i,j=0}^{p-2}{\frac 1{2^{i+j}}}d(2^{i}a, 2^{j}b,c)=0$}. The proof  uses a similar idea as the proof of Proposition \ref{12345}, but we include all the details here for convenience. 

 Notice that \[d(a,b,c)=w(a,b,c)+v(a,b,c)\] where $w(a,b,c)$ contains all the products of elements $a,b,c$ which appear as summands in  $d(a,b,c)$ and in which $a$ appears at least twice, and 
$v(a,b,c)$ is a sum of products which are summands in $d(a,b,c)$ and in which $a$ appears only once (and hence $b$ appears at least twice).
It suffices to show that \[\sum_{i,j=0}^{p-2}{\frac 1{2^{i+j}}}w(2^{i}a, 2^{j}b,c)=0\] and 
\[\sum_{i,j=0}^{p-2}{\frac 1{2^{i+j}}}v(2^{i}a, 2^{j}b,c)=0.\]
 Observe that  it suffices to show that $\sum_{i=0}^{p-2}{\frac 1{2^{i}}}w(2^{i}a, b',c)=0$
   and  $\sum_{j=0}^{p-2}{\frac 1{2^{j}}}v(a', 2^{j}b, c)=0$ for any $a, a',  b, b', c\in A$ (notice that $a', b'$ denote arbitrary elements of $A$, and should not be confused with inverses of elements  $a$ and $b$).

 We will first show that \[\sum_{i=0}^{p-2}{\frac 1{2^{i}}}w(2^{i}a, b',c)=0.\]

Observe that there is a vector $W$ with entries in ${\mathbb F}_{p}$ such that $w(a,b',c)=W^{T}V_{a,b',c}'$ where $V_{a,b',c}'$ is a vector constructed as in Notation 1 but only including as entries these products from $E(a,b',c)$  in which $a$ appears at least twice. 
  By using Lemma \ref{fajny} several times (similarly as in the proof of Proposition \ref{12345}) there exists a matrix $M$ such that 
$V_{2a, b', c}'=MV_{a,b',c}'$ and $M$ is upper triangular with diagonal entries $2^{i}$ for $i\geq 2$ (because $a$ appears at least twice in each product which is an entry in $V_{a,b',c}'$)  and for $i< k$ because $k$ is the nilpotency index of $A$ (recall that $2^{k}< p$). Therefore ${\frac 12}M-I$ is an invertible matrix.
 We can assume that the entries of $M$  belong to $\mathbb F_{p}$ since $pa=0$ for every $a\in A$    (we  use integers to represent elements of $\mathbb F_{p}$). 

Therefore, \[ \sum_{i=0}^{p-2}{\frac 1{2^{i}}}w(2^{i}a, b',c)=\sum_{i=0}^{p-2}{\frac 1{2^{i}}}W^{T}V_{2^{i}a,b',c}'=\sum_{i=0}^{p-2}W^{T}{\frac 1{2^{i}}}M^{i}V_{a,b',c}'.\]

Notice that $2^{p-1}=1$ in ${\mathbb F}_{p}$, therefore \[V_{a,b',c}'=V_{2^{p-1}a,b',c}'=M^{p-1}V_{a,b',c}'.\]
 Notice that ${\frac 1{2^{p-1}}}=1$ in  ${\mathbb F}_{p}$. It follows that \[\sum_{i=1}^{p-2}{\frac 1{2^{i}}}M^{i}V_{a,b',c}'=(I-{\frac 12}M)^{-1}(({\frac 12})^{p-1}M^{p-1}-I)V_{a,b',c}'=\]
\[=(I-{\frac 12}M)^{-1}(M^{p-1}-I)V'_{a,b',c}=0.\]
 It follows that \[ \sum_{i=0}^{p-2}{\frac 1{2^{i}}}w(2^{i}a, b',c)=0.\]
 The proof that 
 $\sum_{j=0}^{p-2}{\frac 1{2^{j}}}v(a', 2^{j}b,c)=0$ for all $a', b,c\in A$ is similar.
 Observe that there is a vector $W'$ with entries in ${\mathbb F}_{p}$ such that $v(a',b,c)=W'^{T}V_{a', b,c}''$, where $V_{a', b, c}''$ is a vector constructed as in Notation 1 but only including as entries those products in which $b$ appears at least twice. 
 By applying Lemma \ref{fajny} several times,  there exists a matrix $\bar M$  such that 
$V_{a', 2b,  c}''={\bar M}V_{a', b,c}''$ and ${\bar M}$ is upper triangular with diagonal entries $2^{i}$ for $i\geq 2$ (because $b$ appears at least twice  in each product which is an entry in $V_{a',b,c}''$)  and for $i< k$. We can assume that the entries of $\bar M$ belong to $\mathbb F_{p}$ since $pa=0$ for every $a\in A$. 
 Similarly as before, \[V_{a',b,c}''=V_{a', 2^{p-1}b, c}''={\bar M}^{p-1}V_{a', b,c}''={\frac 1{2^{p-1}}}{\bar M}^{p-1}V_{a', b,c}''.\] It follows that \[ \sum_{i=0}^{p-2}{\frac 1{2^{j}}}v(a', 2^{j}b,c)=\sum_{i=0}^{p-2}{\frac 1{2^{j}}}W'^{T}V_{a',2^{j}b,c}''=\sum_{i=0}^{p-2}W'^{T}{\frac 1{2^{j}}}{\bar M}^{j}V_{a',b,c}''\]
hence \[\sum_{j=0}^{p-2}{\frac 1{2^{j}}}v(a', 2^{j}b, c)=(I-{\frac 12}{\bar M})^{-1}({\frac 1{2^{p-1}}}{\bar M}^{p-1}-I)V_{a',b,c}''=0.\]

\end{proof} 

 We obtain the following corollary: 
\begin{corollary}\label{890}

  Let $A$ be an ${\mathbb F}_{p}$-brace of degree $k$ which is strongly nilpotent. Assume that 
$2^{k}<p$.  As usual the operations on $A$ are $+$, $\circ $  and $*$ where $a*b=a\circ b-a-b$. Define the binary operation $\cdot $ on $A$ as follows 
\[a\cdot b=\sum_{i=0}^{p-2}{\frac 1{2^{i}}}((2^{i}a)* b),\]
    for $a, b\in A$ where $2^{i}a$ denotes the sum of $2^{i}$ copies of element $a$, and $2^{-1}$ denotes the inverse of $2^{i}$ in ${\mathbb F}_{p}$.
Define $a\odot b=-(a\cdot b)$, then $A$ with operations $+$ and $\odot $ is a pre- Lie algebra over the field ${\mathbb F}_{p}$.
\end{corollary}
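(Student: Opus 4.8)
The plan is to verify the three defining properties of a pre-Lie algebra over $\mathbb{F}_{p}$ directly, treating the corollary as a repackaging of Proposition \ref{12345} and Theorem \ref{main}. The additive group of $A$ is an $\mathbb{F}_{p}$-vector space by the definition of an $\mathbb{F}_{p}$-brace, so that part is immediate. For bilinearity of $\odot$, Proposition \ref{12345} gives $(a+b)\cdot c=a\cdot c+b\cdot c$ and $a\cdot(b+c)=a\cdot b+a\cdot c$; hence $\odot=-(\cdot)$ is biadditive. Since $\mathbb{F}_{p}$ is the prime field, every scalar $\alpha$ is a sum of copies of $1$, so biadditivity automatically upgrades to $(\alpha a)\odot b=\alpha(a\odot b)=a\odot(\alpha b)$ for all $\alpha\in\mathbb{F}_{p}$. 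Thus $\odot$ is a well-defined $\mathbb{F}_{p}$-bilinear operation on the vector space $A$.

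It remains to check the pre-Lie identity $(a\odot b)\odot c-a\odot(b\odot c)=(b\odot a)\odot c-b\odot(a\odot c)$. I would first record that the proof of Theorem \ref{main} in fact establishes
\[(a\cdot b)\cdot c-a\cdot(b\cdot c)=(b\cdot a)\cdot c-b\cdot(a\cdot c)\]
for all $a,b,c\in A$ (the right-hand side of the display in the statement of Theorem \ref{main} should read this way), which is precisely the pre-Lie identity for $\cdot$. To pass from $\cdot$ to $\odot=-(\cdot)$, use biadditivity to move signs through the arguments: $(a\odot b)\odot c=-((-(a\cdot b))\cdot c)=(a\cdot b)\cdot c$ and $a\odot(b\odot c)=-(a\cdot(-(b\cdot c)))=a\cdot(b\cdot c)$, so that
\[(a\odot b)\odot c-a\odot(b\odot c)=(a\cdot b)\cdot c-a\cdot(b\cdot c),\]
together with the same computation after interchanging $a$ and $b$. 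The pre-Lie identity for $\odot$ is therefore literally the identity just quoted from the proof of Theorem \ref{main}, and the corollary follows. One may add, although the statement does not demand it, that every $\odot$-product of $k$ elements of $A$ is an $\mathbb{F}_{p}$-linear combination of $*$-products of at least $k$ elements, obtained by expanding the defining formula for $\cdot$ and applying Lemma \ref{fajny} and bilinearity; since $A$ is strongly nilpotent of degree $k$, all such products vanish, so the resulting pre-Lie algebra is nilpotent with the same additive group as $A$.

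I do not anticipate a real obstacle: the two substantive ingredients --- biadditivity of $\cdot$ and the pre-Lie identity for $\cdot$ --- are exactly Proposition \ref{12345} and Theorem \ref{main}. The only points needing a word of care are the sign bookkeeping for $\odot=-(\cdot)$, which succeeds because each side of the pre-Lie identity applies the product twice and the factor $(-1)^{2}=1$ is absorbed once the inner sign is pulled out by biadditivity, and the (automatic) passage from biadditivity to $\mathbb{F}_{p}$-bilinearity over the prime field.
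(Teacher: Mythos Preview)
Your proposal is correct and follows essentially the same route as the paper: use Proposition~\ref{12345} (biadditivity of $\cdot$) to show $(a\odot b)\odot c=(a\cdot b)\cdot c$ and $a\odot(b\odot c)=a\cdot(b\cdot c)$, then invoke Theorem~\ref{main} for the pre-Lie identity. Your write-up is more explicit about the $\mathbb{F}_{p}$-bilinearity and the typo in the statement of Theorem~\ref{main}, and adds a nilpotency remark, but the argument is the same.
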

\begin{proof} By Proposition \ref{12345}, $(a\odot b)\odot c=(-(a\cdot b)\odot c)=-(-(a\cdot b)\cdot c)=(a\cdot b)\cdot c$, similarly,
$a\odot (b\odot c)=-(a\cdot(-(b\cdot c))=a\cdot(b\cdot c)$. by Theorem \ref{main}, $A$ with operations $+, \odot $ is a pre-Lie algebra.
\end{proof}
 
\section{From pre-Lie algebras to braces}\label{mi} 

  Observe that if the pre-Lie algebra is nilpotent  we can use the group of flows of a pre-Lie algebra to obtain the passage from 
  finite nilpotent pre-Lie algebras of cardinality $p^{n}$ and right nilpotent ${\mathbb F}_{p}$-braces in a manner similar to \cite{new}.
 Upon closer inspection this gives the same brace when we use Lazard's correspondence and later change the obtained group with 1-cocycle into brace suggested by Rump in \cite{rump} on  pages 135, 141.
 The correspondence betwen groups with bijective $1$-cocycles and braces is mentioned in Theorem 2.1 in  \cite{Rio}.
 As mentioned by Rump in a private correspondence, the addition in the pre-Lie algebra and in the corresponding brace is always the same, so we only need to define the multiplication $\circ $ in the brace.

 Let $A$ with operations $+$ and $\cdot $ be a nilpotent pre-Lie algebra (over field ${\mathbb F}_{p}$) of nilpotency index $k$. Recall  that  a pre-Lie algebra $A$ is nilpotent of nilpotency index $n$ if  the product of any $n$ elements is zero in this pre-Lie algebra, and $n$ is minimal possible. 
 Let $p$ be a prime number larger than ${k}$ and let ${\mathbb F}_{p}$ be the field of $p$ elements. Define the $\mathbb F_{p}$-brace $(A,+, \circ )$ with the same addition as in the pre-Lie algebra $A$ and with the multiplication $\circ $ defined  as in the group of flows as follows. 
\begin{enumerate}

\item Let $a\in A$, and let  $L_{a}:A\rightarrow A$ denote the left multiplication by $a$, so
$L_{a}(b)=a\cdot b$.
 Define $L_{c}\cdot L_{b}(a)=L_{c}(L_{b}(a))=c\cdot (b\cdot a)$.
 Define \[e^{L_{a}}(b)=b+a\cdot b+{\frac 1{2!}}a\cdot (a\cdot b)+{\frac 1{3!}}a\cdot (a\cdot (a\cdot b))+\cdots \]
 where the sum `stops' at place $k$, since the nilpotency index of $A$ is $k$. This is well defined since $p>{k}$.

\item  We can formally consider element $1$ such that $1\cdot a=a\cdot 1=a$ in our pre-Lie algebra (as in \cite{M})  and
 define \[W(a)=e^{L_{a}}(1)-1=a+{\frac 1{2!}}a\cdot a+{\frac 1{3!}}a\cdot (a\cdot a)+ \cdots \]
 where the sum `stops' at  place $k$. 
 Notice that $W:A\rightarrow A$ is a bijective function, provided that $A$ is a nilpotent pre-Lie algebra.

\item Let $\Omega :A\rightarrow A$ be the inverse function to the function $W$, so  $\Omega (W(a))=W(\Omega (a))=a$.
  Following \cite{M} the first terms of $\Omega $ are
\[ \Omega (a)=  a-{\frac 12}a\cdot a +{\frac 14} (a\cdot a)\cdot a +{\frac {1}{12}}a\cdot (a\cdot a) +\ldots \]
 where the sum stops at place $k$. 
In \cite{M} the formula for $\Omega $ is given using Bernoulli numbers. This assures that $p$ does not appear in a denominator.
\item Define\[a\circ b=a+e^{L_{\Omega (a)}}(b).\]
 Here the addition is the same as in the pre-Lie algebra $A$.
It was shown in \cite{AG}  that $(A, \circ )$ is a group.  The same argument will work in our case, as $(W(a)\circ W(b))\circ W(c)=W(a)\circ (W(b)\circ W(c))$ for $a,b,c\in A$ by BCH formula (at this stage the result is related to Lazard's correspondence).  We can immediately  see that $(A, +, \circ )$ is a left brace because
 \[a\circ (b+c)+a=a+e^{L_{\Omega (a)}}(b+c)+a=(a+e^{L_{\Omega (a)}}(b))+(a+e^{L_{\Omega (a)}}(c))=a\circ b+a\circ c.\]

\end{enumerate}

The following question  remains unanswered: 
 
\begin{question}
 If $A$ is a pre-Lie algebra over a field $F_{p}$, is the map $W(a)$ a bijection, provided that $A$ is left nilpotent (but not necessarily  right nilpotent)? 
\end{question}

 Notice that the map  $W$ is well defined but it is not clear if it is a bijection. If  the answer is yes, then  it would be possible to get much stronger results than here, and to generalise the correspondence with pre-Lie algebras to all $\mathbb F_{p}$-braces of cardinality $p^{n}$.

\section{The correspondence is one-to-one}\label{fasola}

$ $

 In this chapter we show that the correspondence between strongly nilpotent $\mathbb F_{p}$-braces of nilpotency index $k$ and nilpotent pre-Lie algebras over $\mathbb F_{p}$ of nilpotency index $k$ is one-to-one for $p>2^{k}$, where  $p$ is a prime number. We begin with the following proposition.

\begin{proposition} \label{lim}
 Let $(A, +, \cdot )$ be a nilpotent  pre-Lie algebra over a field $\mathbb F_{p}$ of nilpotency index $k$, where $2^{k}<p$. Let $(A, +, \circ )$ be the  brace obtained as in Section \ref{mi}, so $(A, \circ )$ is the formal group of flows of the pre-Lie algebra $A$. Then $A$ has the nilpotency index $k$. Moreover, the following holds,
\[a\cdot b=-\sum_{i=0}^{p-2}{\frac 1{2^{i}}}((2^{i}a)* b),\]
    for $a, b\in A$, where $2^{i}a$ denotes the sum of $2^{i}$ copies of element $a$, and $2^{-i}={\frac 1{2^{i}}}$ denotes the inverse of $2^{i}$ in ${\mathbb F}_{p}$.
\end{proposition}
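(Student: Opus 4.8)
The plan is to compute the brace product $\ast$ explicitly from the definition of $\circ$ given in Section~\ref{mi} and then to read off both assertions. From $a\circ b=a+e^{L_{\Omega(a)}}(b)$ one gets $a\ast b=e^{L_{\Omega(a)}}(b)-b=\Omega(a)\cdot b+\tfrac1{2!}\,\Omega(a)\cdot(\Omega(a)\cdot b)+\cdots$, a finite sum since $A$ is nilpotent; inserting the homogeneous expansion $\Omega(a)=\sum_{n\ge1}\Omega_n(a)$ with $\Omega_n$ of degree $n$, this displays $h(a):=a\ast b$ (for $b$ fixed) as a finite sum of expressions, each homogeneous of a well-defined degree $m\ge1$ in $a$ (all degrees being $<p$ since $p>k$). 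Collecting terms of equal degree, $h=\sum_{m\ge1}h_m$ with $h_m(\lambda a)=\lambda^m h_m(a)$ for $\lambda\in{\mathbb F}_p$, by ${\mathbb F}_p$-multilinearity of the pre-Lie product. Two facts will drive everything: (i) $h_1(a)=a\cdot b$, the only degree-one contribution being $\Omega_1(a)\cdot b=a\cdot b$; and (ii) every term in $h_m(a)$ is a pre-Lie product of exactly $m+1$ elements ($m$ copies of $a$ grouped into the $\Omega_{n_\ell}$'s, and one $b$ at the end), so that $h_m=0$ whenever $m+1\ge k$.

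Given (i) and (ii), the identity $a\cdot b=-\sum_{i=0}^{p-2}\tfrac1{2^i}((2^ia)\ast b)$ is a short calculation. Homogeneity gives $\sum_{i=0}^{p-2}\tfrac1{2^i}\,h(2^ia)=\sum_{m\ge1}\Big(\sum_{i=0}^{p-2}(2^{m-1})^i\Big)h_m(a)$, and by (ii) only $1\le m\le k-2$ survive. For $m=1$ the inner sum is $\sum_{i=0}^{p-2}1=p-1=-1$ in ${\mathbb F}_p$, contributing $-h_1(a)=-(a\cdot b)$. For $2\le m\le k-2$ the integer $2^{m-1}$ satisfies $1<2^{m-1}\le 2^{k-3}<2^k<p$, so its class in ${\mathbb F}_p$ is neither $0$ nor $1$, whence $\sum_{i=0}^{p-2}(2^{m-1})^i=\frac{(2^{m-1})^{p-1}-1}{2^{m-1}-1}=0$ by Fermat's little theorem. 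Summing, $\sum_{i=0}^{p-2}\tfrac1{2^i}((2^ia)\ast b)=-(a\cdot b)$, which is the claimed formula. Note that $2^k<p$ is used in an essential way here: it is precisely what forces the ``unwanted'' components $h_m$ ($2\le m\le k-2$) to be multiplied by $0$ rather than by some nonzero residue.

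For the nilpotency index I would compare two multiplicative filtrations. On the pre-Lie side, let $A^m$ be the span of all $\cdot$-products of at least $m$ elements; then $A^s\cdot A^t\subseteq A^{s+t}$, $A^k=0$, and some $(k-1)$-fold $\cdot$-product is nonzero, since the pre-Lie algebra has nilpotency index exactly $k$. As $\Omega_n(a)$ is an $n$-fold $\cdot$-product of copies of $a$, we get $\Omega(a)\in A^m$ whenever $a\in A^m$, hence $a\ast b=e^{L_{\Omega(a)}}(b)-b\in A^{m+n}$ whenever $a\in A^m$, $b\in A^n$; by induction on the bracketing every $k$-fold $\ast$-product of elements of $A=A^1$ lies in $A^k=0$, so $A^{[k]}=0$ and the brace is strongly nilpotent of degree at most $k$. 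For the reverse inequality, observe that $A^{[m]}\ast A^{[n]}\subseteq A^{[m+n]}$ is immediate from $A^{[m+n]}=\sum_{j=1}^{m+n-1}A^{[j]}\ast A^{[m+n-j]}$ (take $j=m$), and that each $A^{[i]}$ is an ideal, hence an ${\mathbb F}_p$-subspace, so $2^ia\in A^{[m]}$ when $a\in A^{[m]}$. Feeding this into the formula just proved, $a\in A^{[m]}$ and $b\in A^{[n]}$ force $a\cdot b\in A^{[m+n]}$; by induction on the bracketing, every $(k-1)$-fold $\cdot$-product of elements of $A=A^{[1]}$ lies in $A^{[k-1]}$. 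Since some such product is nonzero, $A^{[k-1]}\ne0$, and together with $A^{[k]}=0$ this shows the brace has nilpotency index exactly $k$.

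The main obstacle is the sharp bookkeeping behind (ii): one must make sure that the part of $a\ast b$ homogeneous of degree $m$ in $a$ is a combination of pre-Lie products of exactly $m+1$ factors, not merely of at least $m+1$, because it is this exact count, matched against $2^k<p$, that kills each geometric sum $\sum_i(2^{m-1})^i$ for $2\le m\le k-2$ while leaving $m=1$ untouched. A secondary subtlety is that the lower bound on the nilpotency index cannot be reached by substituting the formula into a $(k-1)$-fold $\cdot$-product and distributing, since $\ast$ is not additive in its left argument; it has to be phrased, as above, entirely in terms of the two filtrations.
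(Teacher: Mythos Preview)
Your proof is correct and follows essentially the same route as the paper: you decompose $a\ast b$ into components homogeneous of degree $m$ in $a$ (the paper does this by grouping the words $w\in P_{a,b}$ according to the number of occurrences of $a$), observe that multilinearity of the pre-Lie product gives $h_m(2^ia)=(2^i)^m h_m(a)$, and then evaluate the resulting geometric sums using $2^k<p$ and Fermat. Your treatment of the nilpotency index is the same idea as the paper's closing paragraph but spelled out more carefully---in particular, you make explicit the filtration comparison $A^{[m]}\ast A^{[n]}\subseteq A^{[m+n]}$ and $A^{m}\cdot A^{n}\subseteq A^{m+n}$ and the fact that each $A^{[i]}$, being an additive subgroup of an $\mathbb{F}_p$-vector space, is automatically an $\mathbb{F}_p$-subspace, which is exactly what is needed to feed $2^ia$ back into the formula.
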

\begin{proof} Notice that, by using formulas from Section \ref{mi}, we see that since $(A, \circ )$ is the group of flows of the pre-Lie algebra $A$, then  \[a* b=a\cdot b +\sum _{w\in P_{a,b}} \alpha _{w}w\] where $\alpha _{w}\in {\mathbb F}_{p}$  and $P_{a,b}$ is the set of all products of elements $a$ and $b$ from $(A, \cdot )$ with $b$ appearing only at the end, and $a$ appearing at least two times in each product. Moreover, $\alpha _{w}$ does not depend on $a$ and $b$, but only on their arrangement in word $w$ as an element of  set $P_{a,b}$.
 This follows from the construction of $\Omega (a)$, which is a sum of $a$ and a  linear combination of all possible products of more than one element $a$ with any distribution of brackets, which can be proved by induction.
Notice that each word $w$ will be a product of at most $k$ elements because pre-Lie algebra $A$ has nilpotency index $k$.
 Let $w\in P_{a,b}$, then $w$ is a product of some elements $a$ and element $b$. We define the word $w_{2^{i}}$ to be the  word obtained if at each place where $a$ appears in $w$ we write $2^{i}a$ instead of a.
 It follows that:
 \[(2^{i}a)* b=(2^{i}a)\cdot b +\sum _{w\in P_{a, b}} \alpha _{w}w_{2^{i}}.\] 
 Consequently,
 \[\sum_{i=0}^{p-2}{\frac 1{2^{i}}}((2^{i}a)* b)=\sum_{i=0}^{p-2}{\frac 1{2^{i}}}[(2^{i}a)\cdot b+\sum _{w\in P_{a,b}} \alpha _{w}w_{2^{i}}].\]

Notice that \[\sum_{i=0}^{p-2}{\frac 1{2^{i}}}(2^{i}a)\cdot b=(p-1)a\cdot b=-a\cdot b.\]
 Therefore, it suffices to show that for every $w\in P_{a,b}$ we have \[\sum_{i=0}^{p-2}{\frac 1{2^{i}}}w_{2^{i}}=0.\] We know that every pre-Lie algebra is distributive, hence $w_{2^{i}}=(2^{i})^{j}w$ where $j$ is the number of occurences of $a$ in the product which gives $w$. 
 It suffices to show that $\sum_{i=0}^{p-2}{\frac 1{2^{i}}}(2^{ij})=0$ in ${\mathbb F}_{p}$. Because $2\leq j<k$ this is true in ${\mathbb F}_{p}$, as $\sum_{i=0}^{p-2}{\frac 1{2^{i}}}{2^{ji}}= ((2^{j-1})^{p-1}-1)(2^{j-1}-1)^{-1}=0$, which concludes the proof.

  Notice that by the formula for the multiplication $*$ in the group of flows, the nilpotency index in the constructed brace (as the group of flows) will be at most the same as the nilpotency index of the pre-Lie algebra $A$. On the other hand, the formula from the beginning of this proof assures that the nilpotency index of the pre-Lie algebra $A$ does not exceed the nilpotency index of the brace which is its group of flows. So the nilpotency indexes agree. 
\end{proof}

We now show that the correspondence is one-to-one:

\begin{theorem}\label{5} Let $p$ be a prime number and ${\mathbb F}_{p}$ be the field consisting of $p$ elements. 
 Let $(A, +, \circ )$ be a strongly nilpotent brace of nilpotency index $k$ where $2^{k}<p$. Let $(A, +, \cdot )$ be a nilpotent  pre-Lie algebra over the field $\mathbb F_{p}$ 
 obtained from this brace using Theorem \ref{main} and Corollary \ref{890}, so $(A, +, \cdot )$ has the same addition as brace $(A, +, \circ )$ and the multiplication is defined as
 \[a\cdot b=-\sum_{i=0}^{p-2}{\frac 1{2^{i}}} ({2^{i}}a)*b.\]
 
 Then $(A, \circ)$ is the group of flows of the pre-Lie algebra $(A, +, \cdot )$ and  $(A, +,  \circ )$ can be obtained as in 
 Section \ref{mi} from pre-Lie algebra $(A, +, \cdot )$. 
\end{theorem}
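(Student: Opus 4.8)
The plan is to show that $(A,+,\circ)$ coincides with the brace $(A,+,\circ')$ obtained from the pre-Lie algebra $(A,+,\cdot)$ by the group-of-flows construction of Section~\ref{mi}. Since $(A,\circ')$ is by construction the group of flows of $(A,+,\cdot)$, the statement follows once $\circ=\circ'$; and as the two braces carry the same addition (that of the pre-Lie algebra), this reduces to proving $a*b=a*'b$ for all $a,b\in A$, where $a*b=a\circ b-a-b$ and $a*'b=a\circ'b-a-b$. Here two formulas are available: by hypothesis $a\cdot b=-\sum_{i=0}^{p-2}{\frac 1{2^{i}}}(2^{i}a)*b$, and by Proposition~\ref{lim} (applied to $(A,+,\cdot)$ and its group of flows) also $a\cdot b=-\sum_{i=0}^{p-2}{\frac 1{2^{i}}}(2^{i}a)*'b$. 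Subtracting and setting $\delta(a,b):=a*b-a*'b$, we get $\sum_{i=0}^{p-2}{\frac 1{2^{i}}}\,\delta(2^{i}a,b)=0$ for all $a,b\in A$, and the whole theorem comes down to deducing $\delta\equiv 0$ from this.

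To organise the argument I would use a single filtration, the strong-nilpotency chain $F_{n}:=A^{[n]}$ of the given brace, so that $F_{1}\supseteq F_{2}\supseteq\cdots\supseteq F_{k}=0$ and $F_{i}*F_{j}\subseteq F_{i+j}$. Since $(2^{i}x)*y\in A^{[i]}*A^{[j]}\subseteq A^{[i+j]}$ for $x\in F_{i}$, $y\in F_{j}$, the displayed formula for $\cdot$ shows $F_{i}\cdot F_{j}\subseteq F_{i+j}$; and, rewriting $(2^{i}a)*b-2^{i}(a*b)$ through Lemma~\ref{fajny} as an ${\mathbb F}_{p}$-combination of $*$-products containing at least two copies of $a$, the same formula gives $a\cdot b\equiv a*b\pmod{F_{2m+n}}$, hence $\pmod{F_{m+n+1}}$, for $a\in F_{m}$, $b\in F_{n}$. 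Because $a*'b=\Omega(a)\cdot b+{\frac 1{2!}}\Omega(a)\cdot(\Omega(a)\cdot b)+\cdots$ with $\Omega(a)=a-{\frac 12}a\cdot a+\cdots$ is a universal polynomial expression in $\cdot$ alone, the chain $F_{n}$ is also preserved by $*'$ and $a*'b\equiv a\cdot b\pmod{F_{2m+n}}$ likewise; in particular $\delta(a,b)\in F_{m+n+1}$ whenever $a\in F_{m}$, $b\in F_{n}$.

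The heart of the proof is then an induction on $N\ge 1$ establishing $\delta(a,b)\in F_{m+n+N}$ for all $a\in F_{m}$, $b\in F_{n}$, the case $N=1$ being the last sentence above. For the inductive step, note that $(a+a')*b-a*b-a'*b$ is an ${\mathbb F}_{p}$-combination $\sum_{w}\alpha_{w}w$ of $*$-products $w$ each involving both $a$ and $a'$ and having $b$ at the end (a consequence of Lemma~\ref{fajny}, with the coefficients $\alpha_{w}$ the same for $*'$); and for each such $w$, by changing the operations in $w$ from $*$ to $*'$ one node at a time --- using additivity of the brace operation in its second argument, the inductive hypothesis on $\delta$ applied to subproducts, and the fact that $F$ is preserved by $*$, $*'$ and $\cdot$ --- one finds that $w_{*}-w_{*'}$ is a sum of brace-products each built from a single value $\delta(\,\cdot\,,\,\cdot\,)$, and hence lies in $F_{N+\mathrm{wt}(w)}$, where $\mathrm{wt}(w)$ denotes the sum of the filtration levels of the leaves of $w$. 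Summing over $w$ gives $\delta(a+a',b)\equiv\delta(a,b)+\delta(a',b)\pmod{F_{m+m'+n+N}}$ for $a\in F_{m}$, $a'\in F_{m'}$, $b\in F_{n}$; iterating one copy of $a$ at a time (so $m'=m$) gives $\delta(2^{i}a,b)\equiv 2^{i}\delta(a,b)\pmod{F_{2m+n+N}}$, which, since $m\ge 1$, holds $\pmod{F_{m+n+N+1}}$. Reducing the identity $\sum_{i=0}^{p-2}{\frac 1{2^{i}}}\,\delta(2^{i}a,b)=0$ modulo $F_{m+n+N+1}$ now yields $(p-1)\,\delta(a,b)\equiv 0$, and since $p-1\neq 0$ in ${\mathbb F}_{p}$ we obtain $\delta(a,b)\in F_{m+n+N+1}$. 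Taking $N$ with $N+2\ge k$ and $m=n=1$ forces $\delta(a,b)\in F_{k}=0$ for all $a,b\in A$, so $*=*'$, $\circ=\circ'$, and the theorem follows.

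I expect the main difficulty to be the combinatorial bookkeeping inside the inductive step: one must check that, node by node, replacing $*$ by $*'$ in a product $w$ really does produce only terms carrying the full extra filtration weight $N$ supplied by $\delta$ --- so that after the iteration $\delta(2^{i}a,b)-2^{i}\delta(a,b)$ ends up one filtration layer below the current hypothesis --- and this requires keeping careful track of which subproduct sits in the second (additive) slot of each brace multiplication. Once that is secured, the cancellation is immediate, because $\sum_{i=0}^{p-2}{\frac 1{2^{i}}}\cdot 2^{i}=p-1\neq 0$ in ${\mathbb F}_{p}$; this is the same arithmetic mechanism that powers the proofs of Proposition~\ref{12345} and Theorem~\ref{main}, used here in reverse to invert the passage from braces to pre-Lie algebras.
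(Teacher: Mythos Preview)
Your argument is correct and complete in outline, but it takes a genuinely different route from the paper's own proof.

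The paper proves injectivity of the passage brace $\rightarrow$ pre-Lie algebra by exhibiting a \emph{universal inverse formula}: from Lemma~\ref{fajny} one obtains $a\cdot b=a*b+\sum_{w\in E_{a,b}}\alpha_w\,w$, where each $w$ is a $*$-product of at least two $a$'s and one $b$, and the coefficients $\alpha_w\in\mathbb{F}_p$ depend only on the shape of $w$ and not on the particular brace. Iterating this identity (substituting it into each $w$) expresses $a*b$ as a fixed polynomial in the pre-Lie operation $\cdot$, valid in every strongly nilpotent brace of index at most $k$. Hence two such braces producing the same $(A,+,\cdot)$ must coincide; since by Proposition~\ref{lim} the group-of-flows brace also yields $(A,+,\cdot)$ under the same formula, the conclusion follows in one stroke.

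You instead show $\delta:=*-*'$ vanishes by a direct descent through the filtration $F_n=A^{[n]}$: after invoking Proposition~\ref{lim} to get $\sum_{i}2^{-i}\delta(2^{i}a,b)=0$, you establish by induction on $N$ that $\delta(F_m,F_n)\subseteq F_{m+n+N}$, the inductive step resting on approximate additivity of $\delta$ in the first variable (Lemma~\ref{fajny} applied with identical universal coefficients to both braces, plus the structural induction on $w_*-w_{*'}$ that you correctly flag as the delicate bookkeeping). The paper's approach is shorter and more conceptual --- one reusable statement about universal invertibility --- while yours is more explicit and makes visible how the discrepancy $\delta$ is forced layer by layer to zero. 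Both arguments ultimately hinge on the same inputs: Proposition~\ref{lim}, the universality of the coefficients in Lemma~\ref{fajny}, and the identity $\sum_{i=0}^{p-2}1=-1\neq 0$ in $\mathbb{F}_p$.
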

\begin{proof} Let $E_{a,b}\subseteq A$  denote the set consisting of any product of elements $a$ and one element $b$ at the end of each product under the operation $*$,  in any order, with any distribution of brackets, each product consisting of at least 2 elements $a$.
 Observe that by Lemma \ref{fajny} applied several times 
\[a\cdot  b=a* b +\sum_{w\in E_{a,b}}\alpha _{w}w\] where 
$\alpha _{w}\in F_{p}$ do not depend on $a,b$, but only on their arrangement in word $w$ as an element of $E_{a,b}$. Moreover, 
 each $w$ is a product of at least $3$ elements from the set $\{a, b\}$. Observe that coefficients $\alpha _{w}$ do not  depend on the brace $A$ as they were constructed using the formula from Lemma \ref{fajny} which holds in every strongly nilpotent brace. Therefore,  for any given $k$, the same formula will hold for all braces of nilpotency index $k$ on the set $A$.

Notice that the formula $a\cdot  b=a* b +\sum_{w\in E_{a,b}}\alpha _{w}w$ implies that 
 any product of $i$ elements in the pre-Lie algebra $A$ (under the operation $\cdot $) will belong to $A^{[i]}$. 
 Therefore we can use the formula  $a*b=a\cdot b-\sum_{w\in E_{a,b}}\alpha _{w}w$ several times  to write every element from $E_{a,b}$ as a product of elements $a$ and $b$ under the operation $\cdot $.
 In this way we can recover the brace $(A,  +, \circ )$ from  the pre-Lie algebra $(A, +, \cdot )$. 

 Notice that the nilpotency index of the pre-Lie algebra $(A, +, \cdot)$ will be the same as the nilpotency idex of the brace $(A, +, \circ )$. 
Therefore two distinct strongly nilpotent braces cannot give the same pre-Lie algebra using the formula $a\cdot b=-\sum_{i=0}^{p-2}{\frac 1{2^{i}}} ({2^{i}}a)*b$. 

Notice that, because we know that pre-Lie algebra $(A, +, \cdot )$ can be obtained as in Theorem \ref{main} from the brace which is it's group of flows (by Proposition \ref{lim}), it follows that 
 $(A,\circ)$ is the group of flows of pre-Lie algebra $A$. 
\end{proof}

 Therefore we obtain the following corollary.

\begin{corollary}\cite{L} Let $k$ be a natural number, and $p$ be a prime number such that $p>2^{k}$. Then
 there is one-to-one correspondence between the set of strongly nilpotent ${\mathbb F}_{p}$-braces of nilpotency index $k$ and the set  of  nilpotent pre-Lie algebras over ${\mathbb F}_{p}$ of nilpotency index $k$.
\end{corollary}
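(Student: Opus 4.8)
The plan is to realize the two constructions already built in the paper as mutually inverse maps. Write $\mathcal B_k$ for the set of strongly nilpotent $\mathbb F_p$-braces of nilpotency index $k$ and $\mathcal P_k$ for the set of nilpotent pre-Lie algebras over $\mathbb F_p$ of nilpotency index $k$, and recall that $p>2^k$ is assumed throughout. First I would define $\Phi\colon\mathcal B_k\to\mathcal P_k$ by sending $(A,+,\circ)$ to $(A,+,\odot)$ with $a\odot b=-\sum_{i=0}^{p-2}\frac1{2^i}((2^ia)*b)$. That $\Phi(A)$ is a pre-Lie algebra over $\mathbb F_p$ is exactly the Corollary to Theorem~\ref{main}; that it is nilpotent of index $k$ follows because, by the defining formula, every $\odot$-product of $m$ elements expands into a sum of $*$-products of at least $m$ elements, so strong nilpotency of index $k$ of the brace forces all $\odot$-products of $k$ elements to vanish, and the index is then exactly $k$ by the argument below. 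In the other direction I would define $\Psi\colon\mathcal P_k\to\mathcal B_k$ by sending a nilpotent pre-Lie algebra to its formal group of flows equipped with the brace structure of Section~\ref{mi}; this is a left $\mathbb F_p$-brace by the computation there, it is strongly nilpotent, and its nilpotency index is again $k$ by Proposition~\ref{lim}.

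Next I would verify the two composites. The identity $\Psi\circ\Phi=\mathrm{id}_{\mathcal B_k}$ is precisely Theorem~\ref{5}: for $(A,+,\circ)\in\mathcal B_k$, the pre-Lie algebra $\Phi(A)$ has $(A,\circ)$ as its group of flows, and the brace recovered from $\Phi(A)$ by the recipe of Section~\ref{mi} is the original brace. The identity $\Phi\circ\Psi=\mathrm{id}_{\mathcal P_k}$ is the displayed formula of Proposition~\ref{lim}: for $(A,+,\cdot)\in\mathcal P_k$ with $(A,+,\circ)=\Psi(A)$ one has $a\cdot b=-\sum_{i=0}^{p-2}\frac1{2^i}((2^ia)*b)$, which says exactly that applying $\Phi$ to $\Psi(A)$ returns the original pre-Lie multiplication. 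Since both composites are the identity, $\Phi$ and $\Psi$ are mutually inverse bijections between $\mathcal B_k$ and $\mathcal P_k$, which is the assertion of the corollary.

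I do not expect a substantive obstacle, as all the analytic content already lives in Theorem~\ref{main}, Section~\ref{mi}, Proposition~\ref{lim} and Theorem~\ref{5}; the remaining work is bookkeeping. The one point that needs care is that the two notions of nilpotency index match on the nose in both directions: for $\Psi$ this is the final paragraph of the proof of Proposition~\ref{lim}, and for $\Phi$ it follows formally once $\Psi\circ\Phi=\mathrm{id}$ is known, since $\Psi$ is index-preserving, so $\Phi(A)$ cannot have index $\neq k$ without $\Psi(\Phi(A))=A$ having the wrong index. One should also record, to justify that $\mathcal B_k$ is the correct source collection, that by Rump's theorem every brace of order $p^n$ is left nilpotent and by the result of \cite{Engel} a brace that is both left and right nilpotent is strongly nilpotent, so a right nilpotent finite $\mathbb F_p$-brace is automatically strongly nilpotent of some index $k$, with $2^k<p$ the hypothesis under which all four cited results apply simultaneously.
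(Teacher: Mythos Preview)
Your proposal is correct and follows essentially the same approach as the paper: both arguments use the group-of-flows construction $\Psi$ and the formula $\Phi$ from Theorem~\ref{main}, then invoke Theorem~\ref{5} and Proposition~\ref{lim} to see that these are mutually inverse. The paper phrases this as a pairing argument (each pre-Lie algebra paired with its group of flows, each side in exactly one pair) rather than as explicit composites $\Psi\circ\Phi$ and $\Phi\circ\Psi$, but the content is identical; your handling of the index-preservation bookkeeping is in fact slightly more explicit than the paper's.
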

\begin{proof} 
 For every pre-Lie algebra of nilpotency index $k$ we can attach the brace which is its group of flows and form a   pair.  Since the group of flows is uniquely defined,  every pre-Lie algebra will be in exactly one pair.  Moreover, every brace will be in some pair, by Theorem \ref{5}.
 Observe that every brace will be in exactly one pair, as otherwise there would be two distinct pre-Lie algebras which give the same group of flows. However, by Proposition  \ref{lim} we can apply the formula 
\[a\cdot b=-\sum_{i=0}^{p-2}{\frac 1{2^{i}}}((2^{i}a)* b),\] to recover these pre-Lie algebras from this brace. Because the formula defines uniquely the underlying pre-Lie algebra  every brace is in at most one pair.   
  
\end{proof}

{\em Remark regarding connections with the BCH formula and with Lazard's correspondence: }  Let $(A, +, \cdot )$ be a  finite nilpotent pre-Lie algebra, and $(A, \circ )$ be its group of flows. Notice that the  formula for the multiplication in the group of flows  can also be written using the Baker-Campbell-Hausdorff formula and  Lazard's correspondence:
\[ W(a)\circ W(b)= W(C(a, b)),\]
 where $C(a, b)$ is obtained using the Baker-Campbell-Hausdorff  series in the Lie algebra $L(A)$ \cite{AG}, \cite{M}.  Therefore   there is a group isomorphism between the group $G(L(A))$ obtained from $L(A)$ by using the Baker-Campbell-Hausdorff  (BCH) formula and the group of flows $(A, \circ )$ of a pre-Lie algebra $A$ given by the map $p: a\rightarrow W(a)$.
It follows from the formula  $W(a)\circ W(b)= W(C(a, b))$.   Recall that the Lie algebra $L(A)$ is obtained from the  pre-Lie algebra $A$ by taking $[a,b]=a\cdot b-b\cdot a$, and has the same addition as  $A$.

\begin{itemize} 
 \item Applying the inverse of the BCH formula  to the group $G(L(a))$ gives the map $G(L(A))\rightarrow L(A)$.
 Notice that when we apply the inverse of the BCH formula to  the group of flows $(A, \circ )$ we are applying the inverse of the BCH formula to the group isomorphic to $G(L(A))$. This gives the Lie algebra $L(A, \circ)$  which is isomorphic to $L(A)$, because the addition and multiplication   constructed using  the inverse of the BCH formula  only depends on the group multiplication.
\item Therefore the formula 
 \[a\cdot b=-\sum_{i=0}^{p-2}{\frac 1{2^{i}}} ({2^{i}}a)*b\] from Theorem \ref{5}  applied to the multiplicative group of some strongly nilpotent brace $A$  would give a pre-Lie algebra $A$ whose Lie algebra $L(A)$  is isomorphic to the Lie algebra $L(A, \circ )$ obtained from the inverse of the BCH formula  applied to the multiplicative group $(A, \circ )$ of brace $A$ (because the formula from Theorem \ref{5} reverses the formula which gives the group of flows). 
\item Therefore, for multiplicative groups of strongly nilpotent braces, this may be useful for calculations related to the inverse of the BCH formula.
\end{itemize}
\section{ Applications: braces of cardinality $p^{4}$}
 In this section we will describe all nilpotent pre-Lie algebras of cardinality $p^{4}$ generated by one element and then use Theorem \ref{5} to describe all strongly nilpotent ${\mathbb F}_{p}$-braces of cardinality $p^{4}$ generated by one element.

 Let $A$ be a brace with the usual operations $+, \circ , *$ where $a\circ b=a*b+a+b$. Recall that $A^{[1]}=A$ and $A^{[i+1]}=\sum_{j=1}^{i} A^{[j]}*A^{[i+1-j]}$.  Notice that $A^{[i+1]}\subseteq A^{[i]}$ since $A^{[2]}\subseteq A$. We begin with the following lemma.

\begin{lemma}\label{nil}
 Let $A$ be a strongly nilpotent brace of cardinality $p^{4}$ for some prime number $p$.  Let $k$ be a natural number and suppose that $A^{[k]}=0$, then $A^{[6]} =0$.
\end{lemma}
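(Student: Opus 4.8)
The plan is to exploit the fact that a brace of cardinality $p^4$ has additive group of rank at most $4$, so the descending chain of ideals $A=A^{[1]}\supseteq A^{[2]}\supseteq A^{[3]}\supseteq\cdots$ cannot strictly descend for too long before stabilizing, and once it stabilizes at $0$ we will have $A^{[6]}=0$. First I would record the elementary observations: each $A^{[i]}$ is an ideal of $A$ (this is established in the earlier references on strongly nilpotent braces), each $A^{[i]}$ is an additive subgroup, and $A^{[i+1]}\subseteq A^{[i]}$ as noted just before the lemma. Since $|A|=p^4$, any strictly descending chain of additive subgroups $A\supsetneq A^{[2]}\supsetneq\cdots$ can have length at most $4$ before reaching a subgroup of order $1$; that is, if $A^{[i]}\neq 0$ for $i=1,2,3,4,5$ then at some point two consecutive terms must be equal, and I would then argue that equality $A^{[i]}=A^{[i+1]}$ forces the chain to be constant from that point on, contradicting $A^{[k]}=0$ unless the common value is already $0$.

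The key step is therefore the implication: if $A^{[i]}=A^{[i+1]}$ for some $i$, then $A^{[j]}=A^{[i]}$ for all $j\geq i$, hence $A^{[i]}=A^{[k]}=0$. I would prove this by induction on $j$ using the defining formula $A^{[i+2]}=\sum_{j=1}^{i+1}A^{[j]}*A^{[i+2-j]}$: assuming $A^{[i+1]}=A^{[i]}$, and more generally that $A^{[m]}=A^{[i]}$ for $i\le m\le i+1$, one compares the sum defining $A^{[i+2]}$ with that defining $A^{[i+1]}$ term by term. Each summand $A^{[j]}*A^{[i+2-j]}$ with $j\le i$ contains $A^{[j]}*A^{[i+1-j]}$ because $A^{[i+2-j]}\subseteq A^{[i+1-j]}$, but to get the reverse containment one uses that for the relevant range of indices one of the two factors has already been shown to equal its predecessor. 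A cleaner route may be to first prove the general monotonicity-type fact $A^{[i]}*A^{[j]}\subseteq A^{[i+j-1]}$ (or cite it from \cite{Engel}), from which $A^{[i+1]}=\sum_{j=1}^{i}A^{[j]}*A^{[i+1-j]}\supseteq A^{[1]}*A^{[i]}=A*A^{[i]}$, and then exploit that $A*A^{[i]}=A^{[i]}$ is impossible for $A^{[i]}\ne 0$ in a (strongly) nilpotent brace — since strong nilpotency forces $A*A^{[i]}\subsetneq A^{[i]}$ whenever $A^{[i]}\ne 0$, by an argument analogous to Nakayama. That gives the needed strict descent directly: $A^{[i]}\ne 0\implies A^{[i+1]}\subsetneq A^{[i]}$.

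Granting that strictness, the count is immediate: $p^4\ge |A^{[1]}|>|A^{[2]}|>\cdots$, and since each is a power of $p$, after at most four strict drops we reach $|A^{[m]}|=1$, i.e. $A^{[m]}=0$ for some $m\le 5$, hence $A^{[6]}\subseteq A^{[5]}=0$. The main obstacle I anticipate is justifying the strict inclusion $A^{[i+1]}\subsetneq A^{[i]}$ for $A^{[i]}\ne 0$: this is the brace analogue of Nakayama's lemma and requires knowing that $A*A^{[i]}$ is a proper subset of $A^{[i]}$, which should follow from strong nilpotency (if $A*A^{[i]}=A^{[i]}$ one iterates to get $A^{[i]}=A*(A*(\cdots *A^{[i]}))\subseteq A^{[n]}=0$ for $n$ large), but one must be careful that $A^{[i+1]}$ contains $A*A^{[i]}=A^{[1]}*A^{[i]}$ as one of its defining summands — which it does, taking $j=1$ in $A^{[i+1]}=\sum_{j=1}^{i}A^{[j]}*A^{[i+1-j]}$. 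I would present the Nakayama-style iteration carefully, making sure the bracketing of repeated $*$-products stays within the $A^{[n]}$ filtration, and otherwise the proof is a short counting argument.
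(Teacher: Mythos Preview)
Your overall counting strategy is natural, but the key implication you rely on --- that $A^{[i]}\neq 0$ forces $A^{[i+1]}\subsetneq A^{[i]}$ in a strongly nilpotent brace --- is \emph{false}, and with it your conclusion $A^{[5]}=0$. Later in the paper (the proposition describing one-generator pre-Lie algebras with $A^{5}\neq 0$ and $A^{6}=0$) explicit pre-Lie algebras of cardinality $p^{4}$ are constructed whose nilpotency index is $6$; by the correspondence in Sections~\ref{mi}--\ref{fasola} these yield strongly nilpotent $\mathbb F_p$-braces of cardinality $p^{4}$ with $A^{[5]}\neq 0$ and $A^{[6]}=0$. In such a brace one necessarily has $A^{[4]}=A^{[5]}\neq 0$, directly contradicting your strict-descent claim. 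This is precisely why the lemma asserts only $A^{[6]}=0$ and not $A^{[5]}=0$.

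The source of the error is the Nakayama step. What left/strong nilpotency gives you is $A*A^{[i]}\subsetneq A^{[i]}$ when $A^{[i]}\neq 0$, via iteration $A^{[i]}=A*(A*(\cdots))$. But $A*A^{[i]}$ is only \emph{one summand} of $A^{[i+1]}=\sum_{j=1}^{i}A^{[j]}*A^{[i+1-j]}$; the containment goes the wrong way to conclude anything about $A^{[i+1]}$. Your alternative route, comparing the sums for $A^{[i+1]}$ and $A^{[i+2]}$ term by term, runs into exactly the ``middle'' terms (e.g.\ $A^{[2]}*A^{[2]}$ when $i=3$) where neither factor is $A^{[i]}$ or $A^{[i+1]}$, so the hypothesis $A^{[i]}=A^{[i+1]}$ cannot be applied. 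The paper's proof confronts this head-on: it splits into the case where the first four terms are strictly decreasing and the case where $A^{[i]}=A^{[i+1]}$ for some $i\le 3$, and in the delicate subcase $A^{[3]}=A^{[4]}$ it invokes the pre-Lie--type identity (coming from Lemma~\ref{fajny}, equivalently from the fact that the associated graded of a strongly nilpotent brace is pre-Lie) to show $(x*y)*(z*t)\in A^{[3]}*A+A*A^{[3]}+A^{[5]}$, whence $A^{[2]}*A^{[2]}\subseteq A^{[5]}$ and the induction can proceed. That identity is the genuine extra ingredient your sketch is missing.
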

\begin{proof} 
 Consider sets $A$, $A^{[2]}$, $A^{[3]}$, $A^{[4]}$. Then we have the following two cases: 
\begin{itemize}
\item $A\neq A^{[2]}\neq A^{[3]}\neq A^{[4]}$. Then $A^{[i]}/A^{[i+1]}$ have cardinality $p$ for $i=1,2,3,4$, since the cardinality $A$ is $p^{4}$ so it could not be bigger (it could not be smaller, since every subgroup of the additive group is a $p$-group). In this case we see that the cardinality of $A^{2}$ is $p^{3}$ and the cardinality of $A^{[3]}$ is $p^{2}$ and the cardinality of $A^{[4]}$ is $p$.

 Suppose that $A^{[6]}\neq 0$  then $A^{[4]}=A^{[6]}$ and  $A^{[5]}\subseteq A^{[6]}$. Let $a\in A^{[i]}, b\in A^{[j]}, c\in A^{[l]}$ for some $i,j,l$, then \[(a*b)*c-a*(b*c)-(b*a)*c+b*(a*c)\in A^{[i+j+l+1]}\] by \cite{Iyudu}, so \[A^{[3]}*A^{[3]}\subseteq A*A^{[4]}+A^{[4]}*A+A^{[7]}\subseteq A^{[6]}*A+A*A^{[6]}+A^{[7]}\] (when applied for $i=3,j=2,l=1$ and for $i=3, j=1, l=2$). 

 It follows that  \[A^{[6]}= A^{[5]}* A+A ^{[4]}* A^{[2]}+A^{[3]}*A^{[3]}+A^{[2]}* A^{[4]}+A* A^{[5]}\subseteq A^{[7]}.\] Continuing in this way we get that $A^{[4]}\subseteq A^{[6]}=0$, a contradiction.
\item  $A^{[i]}=A^{[i+1]}$ for some $i\in \{1,2,3\}$. We will show that $A^{[3]}=0$ in this case.  Notice that $A\subseteq  A^{2}$ implies $A^{[j]}\subseteq A^{[j+1]}$ for every $j$, consequently $A=A^{[k]}=0$, a contradiction. Similarly $A^{[2]}\subseteq A^{[3]}$ implies   
 $A^{[j]}\subseteq A^{[j+1]}$ for every $j>1$, consequently $A^{[2]}=A^{[k]}=0$. 
 Consequently we only need to consider the case when $A^{[3]}=A^{[4]}$. Let $x,y,z,t\in A$. 
  Observe that since $A$ is a strongly nilpotent brace we can apply Lemma \ref{fajny} for 
 $a=x*y, b=z, c=t$ and then for $a=z, b=x*y, c=t$ and subtracting  we get that 
\[((x*y)*z)*t-(x*y)*(z*t)-(z*(x*y))*t+z*((x*y)*t)\in A^{[5]}\] (this also follows from the fact that associated graded structures of braces are pre-Lie algebras \cite{Iyudu}). 
  Therefore \[(x*y)*(z*t)\subseteq A^{[3]}*A+A*A^{[3]}\subseteq A^{[5]}.\]
 Therefore we get that $A^{[3]}\subseteq A^{[4]}$ implies $A^{[4]}\subseteq A^{[5]}$. Now we can use this to show by induction that $A^{[j]}\subseteq A^{[j+1]}$ for all $j>3$, consequently \[A^{[3]}\subseteq A^{[4]}\subseteq \ldots \subseteq A^{[k]}=0,\] therefore $A^{[5]}=0$, as required. 
\end{itemize}
\end{proof}

Let $A$ be a pre-Lie algebra, and by $A^{i}$ we denote the linear space over $\mathbb F_{p}$ spanned by all products of $i$ or more elements from $A$. Observe that $A^{1}=A$ and $A^{i+1}=\sum_{j=1}^{i}A^{j}\cdot A^{i+1-j}$.

Our next result is as follows.
 
\begin{lemma}\label{Aggi} Let $p$ be a prime number larger than $2^{6}$ and let ${\mathbb F}_{p}$ be the field consisting of $p$ elements. 
 Let $(A, +, \circ )$ be a  right nilpotent $\mathbb F_{p}$-brace of cardinality $p^{4}$. Then
 $(A, \circ )$ is the group of flows of some  nilpotent pre-Lie algebra $(A, +, \cdot )$.   Moreover the product of any $6$ elements in this  pre-Lie algebra $A$  is zero.
\end{lemma}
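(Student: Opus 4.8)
The plan is to combine the three main machineries already established in the paper. First, since $A$ is a right nilpotent $\mathbb{F}_p$-brace and has order $p^4$, a result of Rump gives that $A$ is left nilpotent, and then the result from \cite{Engel} quoted in Section 2 upgrades this to \emph{strongly} nilpotent. So there is a nilpotency index $k$ with $A^{[k]}=0$; by Lemma \ref{nil} we may in fact take $k=6$, i.e. $A^{[6]}=0$, and hence the strong nilpotency degree of $A$ is at most $6$.

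Next I would invoke the hypothesis $p>2^6$ together with this bound on the nilpotency degree: since the degree $k$ of $A$ satisfies $k\le 6$, we have $2^k\le 2^6<p$, so the standing hypothesis $2^k<p$ of Theorem \ref{main}, Theorem \ref{5} and Proposition \ref{lim} is satisfied. Applying Theorem \ref{main} (and its Corollary) to $A$ produces a nilpotent pre-Lie algebra $(A,+,\cdot)$ over $\mathbb{F}_p$ with the same underlying additive group, via $a\cdot b = -\sum_{i=0}^{p-2} \frac{1}{2^i}((2^i a)*b)$. Then Theorem \ref{5} tells us precisely that $(A,\circ)$ is the group of flows of this pre-Lie algebra $(A,+,\cdot)$, which is the first assertion of the lemma.

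For the final sentence, I would argue that the nilpotency index of the pre-Lie algebra $(A,+,\cdot)$ equals the strong nilpotency degree of the brace $A$. This is exactly the content of the last paragraph of the proof of Proposition \ref{lim}: the formula for $*$ in terms of $\cdot$ in the group of flows shows the brace nilpotency index is at most that of the pre-Lie algebra, and the formula recovering $\cdot$ from $*$ (the one displayed in Theorem \ref{5}, valid since products of $k$ elements under $*$ vanish) shows the reverse inequality. Hence the pre-Lie nilpotency index is at most $6$, i.e. the product of any $6$ elements of $(A,+,\cdot)$ is zero.

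The only genuinely delicate point is checking that the degree bound transfers correctly through the correspondence so that the hypothesis $2^k<p$ is legitimately available when we apply Theorem \ref{main}; this is where Lemma \ref{nil} does the real work, reducing the a priori unbounded nilpotency index of a strongly nilpotent brace of order $p^4$ to the concrete value $6$. Everything else is a direct citation of the results of Sections 2, 3 and 4, so the proof should be short.
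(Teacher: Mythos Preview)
Your proposal is correct and follows essentially the same route as the paper: Rump's result gives left nilpotency, \cite{Engel} upgrades to strong nilpotency, Lemma~\ref{nil} bounds the index by $6$, and then Theorem~\ref{5} (with the hypothesis $2^{6}<p$) yields the group-of-flows description. The paper's proof is terser---it simply says ``the result now follows from Theorem~\ref{5}''---whereas you spell out the intermediate invocation of Theorem~\ref{main} and the nilpotency-index transfer via Proposition~\ref{lim}; but the logical structure is identical.
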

\begin{proof}  By  a result by Rump, every brace of order $p^{n}$ is left nilpotent \cite{Rump}. By a result from \cite{Engel} a brace which is right nilpotent and left nilpotent is strongly nilpotent, therefore our brace $A$ is strongly nilpotent. By Lemma \ref{nil} $A$ has nilpotency index $6$ or less.  The result now  follows from Theorem \ref{5}.
\end{proof}

\begin{lemma} Let assumptions and notation be as in Lemma \ref{Aggi}. Suppose that the  $A$ is generated as a  brace  by one element $x$. Then $(A, \circ )$ is the group of flows of a nilpotent  pre-Lie algebra generated by one element $x$. 
\end{lemma}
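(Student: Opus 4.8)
The plan is to leverage the one-to-one correspondence from Theorem \ref{5} (and Lemma \ref{Aggi}) together with the explicit reversible formulas relating the brace operation $*$ and the pre-Lie product $\cdot$. We already know from Lemma \ref{Aggi} that $(A,\circ)$ is the group of flows of a nilpotent pre-Lie algebra $(A,+,\cdot)$ in which the product of any $6$ elements vanishes. What remains is to show that if $A$ is generated as a brace by the single element $x$, then $(A,+,\cdot)$ is generated as a pre-Lie algebra by $x$.

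First I would recall the two mutually inverse expansions. By Theorem \ref{5} (applied in the form used there and in Proposition \ref{lim}), for all $a,b\in A$ we have
\[
a\cdot b = a*b + \sum_{w\in E_{a,b}}\alpha_w w,
\qquad
a*b = a\cdot b - \sum_{w\in E_{a,b}}\alpha_w w,
\]
where each $w$ is a $*$-product (respectively $\cdot$-product, after re-expanding) of copies of $a$ and one $b$, of total length at least $3$, with coefficients $\alpha_w\in{\mathbb F}_p$ independent of $a,b$. The key structural point is that in both directions the leading term of the expansion (shortest word) is exactly $a*b$ respectively $a\cdot b$, and every correction term is a strictly longer product in the same generators. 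Since $A$ is strongly nilpotent of index $\le 6$, all these sums are finite and the re-expansion terminates.

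Next I would argue by induction on the filtration. Let $P\subseteq A$ be the pre-Lie subalgebra generated by $x$, and let $B\subseteq A$ be the sub-brace generated by $x$; by hypothesis $B=A$. I want $P=A$. Consider the chain $A\supseteq A^{[2]}\supseteq A^{[3]}\supseteq\cdots$ (equivalently the pre-Lie lower central-type filtration $A\supseteq A^2\supseteq A^3\supseteq\cdots$, which by the formulas above coincide, since $a*b$ and $a\cdot b$ differ by higher terms so $A^{[i]}=A^i$ for every $i$ — this identification is worth stating explicitly). I would show by downward induction on $i$ that $P$ contains $A^{[i]}$ modulo $A^{[i+1]}$, hence (the filtration being finite) $P\supseteq A^{[i]}$ for all $i$, and in particular $P=A^{[1]}=A$. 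The inductive step: any element of $A^{[i]}$ is, modulo $A^{[i+1]}$, a sum of $*$-products of length $i$ in $x$; by the displayed formula each such $*$-product equals the corresponding $\cdot$-product of length $i$ in $x$ plus terms in $A^{[i+1]}$; since the $\cdot$-products of $x$ lie in $P$, we get $A^{[i]}\subseteq P + A^{[i+1]}$, and the induction closes because $A^{[7]}=0$ (or $A^{[6]}=0$ by Lemma \ref{nil}).

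The main obstacle, and the only point requiring genuine care, is making the ``modulo higher filtration'' bookkeeping precise: one must check that the re-expansion of a $*$-word of length $i$ into $\cdot$-words genuinely produces one $\cdot$-word of length $i$ (with coefficient $1$) plus a combination of $\cdot$-words of length $>i$, with no cancellation of the leading term. This follows from the fact that the leading coefficient in $a*b = a\cdot b - \sum\alpha_w w$ is $1$ (the $\alpha_w$ attach only to words of length $\ge 3$), applied inductively inside nested products; strong nilpotency guarantees the process halts. Once this is nailed down, the statement follows immediately, and I would also note for completeness that, conversely, since $P\subseteq A$ is a pre-Lie subalgebra, its group of flows is a sub-brace of $(A,\circ)$ containing $x$, so the sub-brace generated by $x$ is contained in $P$'s group of flows — giving the two descriptions of ``generated by $x$'' compatibly, consistent with $P=A$.
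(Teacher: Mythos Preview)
Your proposal is correct and follows essentially the same approach as the paper: both arguments show that every $*$-word in $x$ (which together span $A$ since the brace is generated by $x$ and strongly nilpotent of index at most $6$) can be rewritten as an ${\mathbb F}_p$-linear combination of $\cdot$-words in $x$, forcing the pre-Lie subalgebra generated by $x$ to be all of $A$. The paper simply lists the relevant $*$-words and corresponding $\cdot$-words explicitly and invokes the group-of-flows formula $a*b=e^{L_{\Omega(a)}}(b)-b$ directly, whereas you package the same re-expansion as a downward induction on the filtration $A^{[i]}=A^{i}$; the underlying content is identical.
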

\begin{proof} 
 Let $A$ be the pre-Lie algebra obtained as in Proposition \ref{5} from brace $A$. Then by Theorem \ref{5} $(A, \circ )$ is the group of flows of  the  pre-Lie algebra $(A, +, \cdot )$. By the definition of the group of flows elements
 \[x\cdot x, x\cdot(x\cdot x),  (x\cdot x)\cdot x, (x\cdot x)\cdot (x\cdot x), x\cdot (x\cdot (x\cdot x)),\ldots \] can be presented as linear combination of products  
 \[x, x*x, x*(x*x), (x*x)*x, (x*x)*(x*x), x*(x*(x*x)), \ldots .\]  

Since $A^{[6]}=0$ by Lemma \ref{Aggi}  these elements span brace $(A, +, \circ )$ and hence span  pre-Lie algebra $A$ as a linear space over ${\mathbb F}_{p}$. Notice that elements 
  \[x, x*x, x*(x*x), (x*x)*x, (x*x)*(x*x), x*(x*(x*x)), \ldots \] are linear combination of elements 
 \[x\cdot x, x\cdot(x\cdot x),  (x\cdot x)\cdot x, (x\cdot x)\cdot (x\cdot x), x\cdot (x\cdot (x\cdot x)),\ldots \] (by the proof of Theorem \ref{5}). 
 So the pre-Lie algebra $(A, +, \cdot )$ is generated as pre-Lie algebra by element $x$ and $A^{6}=0$.

\end{proof}
 
\begin{lemma}\label{sweetest}
 Let $(A, +, \cdot )$ be a pre-Lie algebra over $\mathbb F_{p}$ generated as a pre-Lie 
algebra by one element $x$. Suppose that $A$ has cardinality $p^{4}$ and $A^{6}=0$ and $A^{4}\neq 0$. Then $A^{3}\cdot A^{2}=0$, $A^{4}\cdot A=0$, $A\cdot A^{4}=0$.
 Moreover, $x^{2}\cdot (x\cdot x^{2})=0$.
\end{lemma}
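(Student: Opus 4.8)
The plan is to exploit that $A$ is generated by a single element $x$, so every element of $A$ is a linear combination of (left-normed and bracketed) products of copies of $x$; in particular $A^{i}$ is spanned by products of exactly $i$ or more copies of $x$. Since $A$ has cardinality $p^{4}$, the chain $A=A^{1}\supseteq A^{2}\supseteq A^{3}\supseteq A^{4}\supseteq A^{5}\supseteq A^{6}=0$ is a chain of additive subgroups whose successive quotients are $\mathbb{F}_p$-vector spaces, and the hypotheses $A^{4}\neq 0$ and $A^{6}=0$ force each of $A/A^{2},A^{2}/A^{3},A^{3}/A^{4},A^{4}/A^{5}$ to be one-dimensional and $A^{5}=0$ (an argument entirely parallel to the dimension count in Lemma \ref{nil}, but now for the pre-Lie filtration $A^{i}$ rather than the brace filtration $A^{[i]}$; the key point is that $A$ one-generated means $A^{i}/A^{i+1}$ is cyclic, hence at most one-dimensional, so four strict drops already exhaust $p^{4}$). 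From $A^{5}=0$ one reads off $A^{4}\cdot A\subseteq A^{5}=0$ and $A\cdot A^{4}\subseteq A^{5}=0$, and since $A^{3}\cdot A^{2}\subseteq A^{5}=0$ as well, the three displayed vanishings follow at once.

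First I would record that, because $A$ is one-generated, $A^{2}$ is spanned (as a linear space) by $x\cdot x$ together with the image of $A^{2}$ under left and right multiplication by $A$; more usefully, $A^{2}=\mathbb{F}_p(x\cdot x)+A^{3}$, $A^{3}=\mathbb{F}_p\,u+A^{4}$ for some chosen $u$ (e.g. $u=x\cdot(x\cdot x)$ or $u=(x\cdot x)\cdot x$, provided it is nonzero mod $A^{4}$; one of the two must be, since $A^{3}\neq A^{4}$), and $A^{4}=\mathbb{F}_p\,v$ for some degree-$4$ product $v$. The point is that up to scalars the nonzero homogeneous pieces are each represented by a single monomial in $x$.

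Then I would turn to $x^{2}\cdot(x\cdot x^{2})$, writing $x^{2}=x\cdot x$ for brevity. This element is a product of five copies of $x$, hence lies in $A^{5}=0$. So once $A^{5}=0$ is established, $x^{2}\cdot(x\cdot x^{2})=0$ is immediate — and in fact it is a special case of $A^{2}\cdot A^{3}\subseteq A^{5}=0$ (note $x\cdot x^{2}=x\cdot(x\cdot x)\in A^{3}$). The entire statement therefore reduces to the single claim $A^{5}=0$.

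The main obstacle, and the only nontrivial step, is thus the dimension bookkeeping that yields $A^{5}=0$: one must be careful that "one-generated pre-Lie algebra" really does force $A^{i}/A^{i+1}$ to be spanned by a single element. This is where the pre-Lie identity must be used — a priori $A^{2}/A^{3}$ could be spanned by several degree-$2$ brackets, but with one generator there is only $x\cdot x$; at degree $3$ there are two shapes $x\cdot(x\cdot x)$ and $(x\cdot x)\cdot x$, and the pre-Lie relation $(x\cdot x)\cdot x-x\cdot(x\cdot x)=(x\cdot x)\cdot x-x\cdot(x\cdot x)$ is vacuous here, so instead one argues that $A^{3}/A^{4}$ is cyclic because $A^{3}$ is generated over $A$ from $A^{2}$, together with $A^{3}=A\cdot A^{2}+A^{2}\cdot A$ and $A^{2}=\mathbb{F}_p(x\cdot x)+A^{3}$, giving $A^{3}\equiv \mathbb{F}_p\,(x\cdot(x\cdot x))+\mathbb{F}_p\,((x\cdot x)\cdot x)\pmod{A^{4}}$; if this were two-dimensional, then together with $\dim A/A^{2}=\dim A^{2}/A^{3}=1$ and $A^{4}\neq 0$ the total dimension would already exceed $4$, a contradiction. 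Hence $\dim A^{3}/A^{4}\le 1$, and similarly $\dim A^{4}/A^{5}\le 1$, forcing all four drops to be exactly $1$ and $A^{5}=0$. Once this counting is in place, the rest of the lemma is formal.
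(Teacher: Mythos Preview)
Your dimension count has a genuine gap: you conclude $A^{5}=0$, but the hypotheses do not force this. The counting correctly gives $\dim A/A^{2}=\dim A^{2}/A^{3}=\dim A^{3}/A^{4}=1$ and $\dim A^{4}=1$; however, $\dim A^{4}=\dim A^{4}/A^{5}+\dim A^{5}=1$ admits the solution $A^{4}=A^{5}$ (both one-dimensional) just as well as $A^{5}=0$. Your strictness argument, parallel to Lemma~\ref{nil}, shows $A^{i}\neq A^{i+1}$ for $i\le 3$ because equality there would cascade down to $A^{4}=0$; but the same reasoning does \emph{not} rule out $A^{4}=A^{5}$, and indeed Proposition~\ref{aloha}, immediately following this lemma, is devoted precisely to one-generator pre-Lie algebras of cardinality $p^{4}$ with $A^{5}\neq 0$ and $A^{6}=0$. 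So the case you discard is exactly the nontrivial one the lemma must cover.

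The paper's proof treats both cases. When $A^{5}=0$ all four claims lie in $A^{5}$ and are trivial, as you observe. When $A^{5}\neq 0$, the dimension count forces $A^{4}=A^{5}$, whence $A\cdot A^{4}=A\cdot A^{5}\subseteq A^{6}=0$ and $A^{4}\cdot A=A^{5}\cdot A\subseteq A^{6}=0$. For $A^{3}\cdot A^{2}$ one now genuinely needs the pre-Lie identity: for $a\in A^{3}$, the relation $(x\cdot a)\cdot x-x\cdot(a\cdot x)=(a\cdot x)\cdot x-a\cdot x^{2}$ together with $A^{4}\cdot A=A\cdot A^{4}=0$ gives $a\cdot x^{2}=0$; since $A^{2}=\mathbb{F}_{p}x^{2}+A^{3}$ and $A^{3}\cdot A^{3}\subseteq A^{6}=0$, this yields $A^{3}\cdot A^{2}=0$. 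A second application of the identity (to $x$ and $x^{2}$) then gives $x^{2}\cdot(x\cdot x^{2})\in A^{3}\cdot A^{2}+A\cdot A^{4}=0$. Note that $x^{2}\cdot(x\cdot x^{2})\in A^{2}\cdot A^{3}$, not $A^{3}\cdot A^{2}$, so even this last claim is not automatic from the first three.
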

\begin{proof} Reasoning as in Lemma \ref{nil} we get that if $A^{5}\neq 0$ then $A^{4}=A^{5}$, and $A^{6}=0$. 
  Observe that we have the following pre-Lie algebra relations:

Let $a\in A^{3}$, then  
\[(x\cdot a)\cdot x-x\cdot (a\cdot x)=(a\cdot x)\cdot x-a\cdot x^{2}.\]
 Notice that $A^{4}=A^{5}$ implies $A\cdot A^{4}\subseteq A\cdot A^{5}\subseteq A^{6}=0$, similarly $A^{4}\cdot A\subseteq A^{5}\cdot A\subseteq A^{6}=0$.  This along with the above relation imply $A^{3}\cdot A^{2}=0$.
 We  also have the following relation: 
\[(x\cdot x^{2})\cdot x^{2}-x\cdot (x^{2}\cdot x^{2})=(x^{2}\cdot x)\cdot x^{2}-x^{2}\cdot (x\cdot x^{2}),\]
  which implies
\[x^{2}\cdot (x\cdot x^{2})\in A^{3}\cdot A^{2}+A\cdot A^{4}\subseteq  A^{6}=0.\]
\end{proof}

  Let $(A, +, \cdot )$ be a pre-Lie algebra over ${\mathbb F}_{p}$.  For $\alpha \in {\mathbb F}_{p}$ and $a\in A$, by $\alpha a$ we will  denote  the sum of $\alpha $ elements $a$, and we denote  $a^{2}=a\cdot a$.
\begin{proposition}\label{aloha}
 Let $A$ be a pre-Lie algebra over $\mathbb F_{p}$ generated as a pre-Lie algebra by one element $x$. Suppose that $A$ has cardinality $p^{4}$ and $A^{5}\neq 0$ and  $A^{6}=0$. 
  Then the following holds:
\begin{itemize}
\item Elements $x, x^{2}, x^{2}\cdot x, x^{2}\cdot (x^{2}\cdot x)$ form a  base of the pre-Lie $A$ as a vector space over ${\mathbb F}_{p}.$
\item All products of $5$ or more elements $x$ are zero, except of the element $x^{2}\cdot (x^{2}\cdot x)\neq 0$.  
\item The following relations hold,
\[x^{2}\cdot x^{2}=(x^{2}\cdot x)\cdot x+x\cdot (x^{2}\cdot x),\]
 \[(x\cdot x^{2})\cdot x=x\cdot (x\cdot x^{2})=0.\]
 \item For some $\alpha, \beta , \gamma \in {\mathbb F}_{p}$ the following relations hold: 
  \[ (x^{2}\cdot x)\cdot x=\beta (x^{2}\cdot (x^{2}\cdot x)), x\cdot(x^{2}\cdot x)=\gamma  ( x^{2}\cdot (x^{2}\cdot x)),\]
\[x\cdot x^{2}=\alpha (x^{2}\cdot (x^{2}\cdot x)).\]
\end{itemize} 
Therefore, every element from  $A^{4}$ equals element $x^{2}\cdot(x^{2}\cdot x)$ multiplied by some element from $\mathbb F_{p}$.
 Notice that the above relations give a well defined pre-Lie algebra.
\end{proposition}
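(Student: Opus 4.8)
The plan is to work entirely inside the pre-Lie algebra $A$, exploiting the single generator $x$ together with the grading hypothesis $A^{5}\neq 0$, $A^{6}=0$, and to build the basis and relations layer by layer through the filtration $A\supseteq A^{2}\supseteq A^{3}\supseteq A^{4}\supseteq A^{5}$. Since $A$ is generated by $x$, each quotient $A^{i}/A^{i+1}$ is spanned by (images of) products of exactly $i$ copies of $x$, so each such quotient is cyclic as a module over $\mathbb{F}_p$ — hence of dimension $0$ or $1$. Because $|A|=p^4$ and $A^{5}\neq 0$ forces (by the argument already used in Lemma \ref{nil} and the first line of Lemma \ref{sweetest}) $A^{5}=A^{4}$ one-dimensional, while $A^{6}=0$ then propagates so that $A^{6}=0$ actually means $A^{5}\cdot A = A\cdot A^{5} = 0$; combined with $\dim A = 4$ this pins down the dimensions of the successive quotients as $1,1,1,1$, with $A^{4}=A^{5}$ one-dimensional and $A^{6}=0$. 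First I would record these dimension counts carefully, since everything downstream rests on them.

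Next I would produce the explicit basis. The quotient $A/A^{2}$ is spanned by $x$, $A^{2}/A^{3}$ by $x^{2}=x\cdot x$, $A^{3}/A^{4}$ by one of the two degree-3 products $x\cdot x^{2}$ or $x^{2}\cdot x$, and $A^{4}=A^{5}$ by one degree-4 product. One checks, using the pre-Lie identity applied to $(x,x,x^{2})$ and to $(x,x^{2},x)$ together with $A^{5}\cdot A = A\cdot A^{5}=0$, that the candidate $x\cdot(x\cdot x^{2})$ and $(x\cdot x^{2})\cdot x$ both lie in $A^{5}$ and in fact vanish: indeed $x\cdot(x\cdot x^{2}) \in A^{4}$ and the pre-Lie relation $(x\cdot x)\cdot x^{2} - x\cdot(x\cdot x^{2}) = (x\cdot x)\cdot x^{2} - x\cdot(x\cdot x^{2})$ combined with Lemma \ref{sweetest}'s identity $x^{2}\cdot(x\cdot x^{2})=0$ and the relation $A^{4}=A^{5}$ forces it into $A^{6}=0$; similarly for $(x\cdot x^{2})\cdot x$. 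This is essentially the computation in Lemma \ref{sweetest} pushed one step further. I would then single out $x^{2}\cdot(x^{2}\cdot x)$ as the nonzero element spanning $A^{4}$ (all other degree-$\geq 5$ monomials in $x$ being zero, which one verifies by noting each either contains a factor in $A^{4}\cdot A$ or $A\cdot A^{4}$, or reduces to $x^{2}\cdot(x\cdot x^{2})=0$ via the pre-Lie identity). Hence $\{x, x^{2}, x^{2}\cdot x, x^{2}\cdot(x^{2}\cdot x)\}$ is a basis, proving the first two bullet points.

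For the third bullet, the relations $x\cdot(x\cdot x^{2})=0$ and $(x\cdot x^{2})\cdot x=0$ are exactly what was just established, and $x^{2}\cdot x^{2} = x\cdot(x^{2}\cdot x)-(x^{2}\cdot x)\cdot x$ is the pre-Lie identity $(x\cdot x)\cdot x^{2} - x\cdot(x\cdot x^{2}) = (x\cdot x)\cdot x^{2} - x\cdot(x\cdot x^{2})$ reorganised, using $x\cdot(x\cdot x^{2})=0$; I would just write out the instance of the defining identity with the three arguments $x,x,x^{2}$ and cancel. For the fourth bullet, each of $(x^{2}\cdot x)\cdot x$, $x\cdot(x^{2}\cdot x)$ lies in $A^{3}\cap(\text{degree-3 products})$, hence in the line $A^{3}/A^{4}$ modulo $A^{4}$; but actually each lies in $A^{4}$ itself because... — here I would note that these degree-$3$-looking products, once multiplied out against the basis, must be scalar multiples of whichever degree-3 basis vector survives, and since $x\cdot x^{2}$ and $x^{2}\cdot x$ differ by an element of $A^{4}$ via the pre-Lie identity on $(x,x,x)$, one can solve for the scalars $\beta,\gamma,\alpha$ by comparing against the chosen basis vector $x^{2}\cdot x$ and against $x^{2}\cdot(x^{2}\cdot x)$. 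The existence of such scalars is automatic from one-dimensionality; the content is only that the named products land in the right graded pieces, which the dimension count delivers. Finally, "the relations give a well-defined pre-Lie algebra" I would justify by observing that the structure constants just written are consistent with the pre-Lie identity on all triples of basis elements — a finite check, most triples being zero because any product of total degree $\geq 5$ other than $x^{2}\cdot(x^{2}\cdot x)$ vanishes.

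The main obstacle I anticipate is the bookkeeping in the third and fourth bullets: one must be genuinely careful about \emph{which} degree-3 product is chosen as the basis vector and track the degree-4 correction terms when passing between $x\cdot x^{2}$ and $x^{2}\cdot x$, since a sign or coefficient error there corrupts the scalars $\alpha,\beta,\gamma$. Establishing cleanly that \emph{every} degree-$\geq 5$ monomial in $x$ other than $x^{2}\cdot(x^{2}\cdot x)$ vanishes — rather than just the ones that appear explicitly — is the step most prone to gaps, and I would handle it by an induction on degree reducing any such monomial, via the pre-Lie identity and $A\cdot A^{4}=A^{4}\cdot A=0$, to $x^{2}\cdot(x\cdot x^{2})$, which is $0$ by Lemma \ref{sweetest}.
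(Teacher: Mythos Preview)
Your overall strategy—filtration by the $A^{i}$, dimension count of the quotients, then extracting a basis—is exactly the paper's, but you skip the one genuinely nontrivial step and your substitute arguments are not valid.

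The key fact you never prove is that $x\cdot x^{2}\in A^{4}$ (equivalently, the relation $x\cdot x^{2}=\alpha\,x^{2}\cdot(x^{2}\cdot x)$ in the fourth bullet). A priori $x\cdot x^{2}$ is only in $A^{3}$, and you cannot get it into $A^{4}$ by the pre-Lie identity alone: the identity with $a=b=c=x$ is a tautology, so it does \emph{not} say that $x\cdot x^{2}$ and $x^{2}\cdot x$ differ by an element of $A^{4}$, as you claim. The paper's argument is a dichotomy: since $\dim A^{3}/A^{4}=1$, either $x\cdot x^{2}\in A^{4}$, or else $x^{2}\cdot x-\alpha\,x\cdot x^{2}\in A^{4}$ for some scalar $\alpha$. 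In the second case, left-multiplying by $x^{2}$ gives $x^{2}\cdot(x^{2}\cdot x)-\alpha\,x^{2}\cdot(x\cdot x^{2})\in x^{2}\cdot A^{4}\subseteq A^{6}=0$; since $x^{2}\cdot(x\cdot x^{2})=0$ by Lemma~\ref{sweetest}, this forces $x^{2}\cdot(x^{2}\cdot x)=0$, and then Lemma~\ref{sweetest} again yields $A^{5}=0$, contradicting the hypothesis. So $x\cdot x^{2}\in A^{4}=A^{5}$, and \emph{only then} do $x\cdot(x\cdot x^{2})$ and $(x\cdot x^{2})\cdot x$ fall into $A\cdot A^{5}$ and $A^{5}\cdot A$, both contained in $A^{6}=0$.

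Your direct attempt at these vanishings does not work: the ``pre-Lie relation'' you display for the triple $(x,x,x^{2})$ has identical left and right sides (the identity is vacuous when the first two arguments coincide), so it gives no information. Likewise, in the fourth bullet you call $(x^{2}\cdot x)\cdot x$ and $x\cdot(x^{2}\cdot x)$ ``degree-$3$-looking products'' and place them in $A^{3}/A^{4}$; they are degree-$4$ products in $A^{4}$. Their expression as scalar multiples of the basis vector of $A^{4}$ is indeed automatic from one-dimensionality once you have the correct basis, but the relation for $x\cdot x^{2}$ is not automatic—it is precisely the missing dichotomy argument above.
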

\begin{proof} By Lemma \ref{sweetest}  the only possible non-zero product of  five or more elements $x$ is $x^{2}\cdot (x^{2}\cdot x)$.
 Reasoning as in Lemma \ref{nil} we obtain that $A^{3}/A^{4}$ has dimension $1$ and $A^{4}=A^{5}$ also has dimension $1$ as a vector space over field  $\mathbb F_{p}$.
 Therefore, $A^{4}=A^{5}=\mathbb F_{p}(x^{2}\cdot(x^{2}\cdot x)).$ Therefore, \[(x^{2}\cdot x)\cdot x=\beta (x^{2}\cdot (x^{2}\cdot x)), x\cdot(x^{2}\cdot x)=\gamma  ( x^{2}\cdot (x^{2}\cdot x)),\] for some $\beta , \gamma  \in \mathbb F_{p}$. 

 Notice that $A^{2}$ is not a subset of $A^{3}=A^{2}\cdot A+A\cdot A^{2}$, since we could then substitute $A^{3}$ instead of $A^{2}$ on the right hand side several times and obtain that $A^{2}\subseteq A^{3}\subseteq A^{4}\subseteq A^{5}\subseteq A^{6}=0$.  Similarly $A$ is not a subset of $A^{2}$.

 Because $A^{3}/A^{4}$ has dimension $1$, then either $x^{2}\cdot x-\alpha x\cdot x^{2}\in A^{4}$ for some $\alpha \in \mathbb F_{p}$ or $x\cdot x^{2}\in A^{4}$. If $x^{2}\cdot x-\alpha x\cdot x^{2}\in A^{4}$ then 
$x^{2}\cdot(x^{2}\cdot x)-\alpha x^{2}\cdot (x\cdot x^{2})\in A^{6}$, and since $x^{2}\cdot (x\cdot x^{2})=0$ by Lemma \ref{sweetest} then we would get $A^{2}\cdot A^{3}=0$.  This and Lemma \ref{sweetest}  would imply $A^{5}=0$. So, since $A^{5}\neq 0$ then $x\cdot x^{2}\in A^{4}=A^{5}$.
Notice   that $x\cdot (x\cdot x^{2})\in A^{6}=0$ and $(x\cdot x^{2})\cdot x\in A^{6}=0$ since $x\cdot x^{2}\in A^{5}$.

 To obtain relation $x^{2}\cdot x^{2}=(x^{2}\cdot x)\cdot x+x\cdot (x^{2}\cdot x) $ we can use the pre-Lie algebra relation
\[(x^{2}\cdot x)\cdot x -x^{2}\cdot x^{2}=(x\cdot x^{2})\cdot x-x\cdot (x^{2}\cdot x)\] and notice that 
$x\cdot x^{2}\in A^{4}=A^{5}$ and so $(x\cdot x^{2})\cdot x\in A^{6}=0.$ 

 This implies the relations from our proposition.
 
 To see that the relations assumed in our theorem give a well defined pre-Lie algebra observe that every element from $A^{i}$ can be writen as sums of elements from the base $x, x^{2}, x^{2}\cdot x, x^{2}\cdot (x^{2}\cdot x)$ which are also from $A^{i}$ so the degree will stay the same or increase (by the degree we mean the number of occurence of $x$ in any product).

 We will check that the algebra is well defined by using the multiplication table, by considering all products $(a\cdot b)\cdot c$ and $a\cdot (b\cdot c)$ of elements from our base, and use the multiplication table to substitute sums of elements from the base for each product 
$a\cdot b$ and $b\cdot c$, and then use this to calculate $(a\cdot b)\cdot c$ and $a\cdot (b\cdot c)$. 
 
 Because any  product of  $6$ or more elements $x$ will be zero, and by substituting elements from our base we cannot decrease the degree (the number of appearance of $x$ in each product), then  we need to only consider products $a\cdot (b\cdot c)$ and $(a\cdot b)\cdot c$ where $a,b,c$ are elements from our base, and $x$ appears at most $5$ times in each product  $a\cdot (b\cdot c)$.

Therefore it  is easy to check with the multiplication table that all of the pre-Lie algebra relations 
 \[(a\cdot b)\cdot c-a\cdot(b \cdot c)=(b\cdot a)\cdot c-b\cdot (a\cdot c),\] are satisfied as we only need to consider the case when $a=x,b=x\cdot x$ and $c\in \{x, x\cdot x\}$ and the case when $a=x^{2}\cdot x$, $b=x, c=x$. 
 Therefore,  every structure of this type is a well defined pre-Lie algebra.
\end{proof}

\begin{proposition}\label{alloha}
 Let $A$ be a pre-Lie algebra over $\mathbb F_{p}$ generated as a pre-Lie algebra by one element $x$. Suppose that $A$ has cardinality $p^{4}$ and $A^{4}\neq 0$ and  $A^{5}=0$. Then 
  the following holds:
\begin{itemize}
\item Elements $x, x^{2}, a, b$ form a  base of the pre-Lie $A$ as a vector space over ${\mathbb F}_{p}$ for some $a\in \{x\cdot x^{2}, x^{2}\cdot x\}$ and some $b\in A^{4}, b\notin A^{3}$.
 Moreover, every element from $A^{i}$ will be a sum of elements from this basis which belong to $A^{i}$ for each $i$.
\item All products of $5$ or more elements from $A$ are zero.  
\item $A^{2}/A^{3}$, $A^{3}/A^{4}$ and $A^{4}$ are one-dimensional vector spaces over ${\mathbb F}_{p}$.
\item The following relation holds:
\[ (x^{2}\cdot x)\cdot x-(x\cdot x^{2})\cdot x =x^{2}\cdot x^{2}-x\cdot (x^{2}\cdot x).\] 
\item For some $\alpha , \beta \in {\mathbb F}_{p}$, not both zero, we have \[\alpha (x\cdot x^{2})+\beta (x^{2}\cdot x)\in A^{4},\]
 and consequently the following relations hold in $A^{4}$:
\[ \alpha x\cdot (x\cdot x^{2})+\beta x\cdot (x^{2}\cdot x)=0  \]
\[\alpha (x\cdot x^{2})\cdot x+\beta (x^{2}\cdot x)\cdot x=0   \]
\end{itemize} 
 Notice that  the above relations give a well defined pre-Lie algebra.
\end{proposition}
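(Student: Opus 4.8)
The plan is to mirror the proof of Proposition \ref{aloha}, adapted to the hypothesis $A^5=0$, so that there are simply no nonzero products of five or more elements. First I would pin down the dimensions. Reasoning exactly as in Lemma \ref{nil} (and as in the proof of Proposition \ref{aloha}): since $A$ is generated by $x$, the inclusion $A\subseteq A^2$ would collapse the whole radical chain, $A^2\subseteq A^3$ would force $A^2=0$, and the pre-Lie identity gives $A^3=A^4\Rightarrow A^4\subseteq A^5=0$; each of these contradicts $A^4\neq 0$. Hence $A\supsetneq A^2\supsetneq A^3\supsetneq A^4\supsetneq A^5=0$ is strictly decreasing, and as $\dim_{\mathbb F_p}A=4$ every quotient $A^i/A^{i+1}$ ($i=1,2,3,4$) is one-dimensional; in particular $\dim A^2=3$, $\dim A^3=2$, $\dim A^4=1$, and all products of five or more elements vanish, which is the second bullet.

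Next I would build the basis. The only degree-$2$ product is $x^2=x\cdot x$, so $A^2/A^3$ is spanned by the class of $x^2$ and $x^2\notin A^3$. The degree-$3$ products $x\cdot x^2$ and $x^2\cdot x$ span $A^3/A^4$, which is one-dimensional, so there are $\alpha,\beta\in\mathbb F_p$, not both zero, with $\alpha(x\cdot x^2)+\beta(x^2\cdot x)\in A^4$, and since $A^3\neq A^4$ at least one of $x\cdot x^2,x^2\cdot x$ lies outside $A^4$ — call it $a$. Then I would invoke the pre-Lie identity for the triple $(x,x^2,x)$, namely
\[(x^2\cdot x)\cdot x-(x\cdot x^2)\cdot x=x^2\cdot x^2-x\cdot(x^2\cdot x),\]
which is the fourth bullet. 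Writing the other of $x\cdot x^2,x^2\cdot x$ as $\lambda a$ plus an element of $A^4$ and using $A\cdot A^4\subseteq A^5=0$ and $A^4\cdot A\subseteq A^5=0$, each of the five degree-$4$ products $x\cdot(x\cdot x^2),\,x\cdot(x^2\cdot x),\,(x\cdot x^2)\cdot x,\,(x^2\cdot x)\cdot x$ and (via the displayed identity) $x^2\cdot x^2$ reduces to an $\mathbb F_p$-combination of $x\cdot a$ and $a\cdot x$. Hence $A^4=\mathbb F_p(x\cdot a)+\mathbb F_p(a\cdot x)$; since $\dim A^4=1$ these two vectors are proportional and at least one is nonzero — take $b$ to be that one, so $b\in\{x\cdot a,a\cdot x\}$ and $b$ is a nonzero element of $A^4$. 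Then $x,x^2,a,b$ represent bases of the successive one-dimensional quotients $A^i/A^{i+1}$, hence form a basis of $A$; because the filtration is respected, any element of $A^i$ is a combination of those basis vectors lying in $A^i$, and $A^4=\mathbb F_p b$, which gives the first and third bullets.

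Then I would read off the relations in $A^4$. Applying left multiplication by $x$ to $\alpha(x\cdot x^2)+\beta(x^2\cdot x)=v$ with $v\in A^4$ gives $\alpha\,x\cdot(x\cdot x^2)+\beta\,x\cdot(x^2\cdot x)=x\cdot v\in A\cdot A^4\subseteq A^5=0$; right multiplication by $x$ gives $\alpha\,(x\cdot x^2)\cdot x+\beta\,(x^2\cdot x)\cdot x=v\cdot x\in A^4\cdot A\subseteq A^5=0$. These are the last two displayed relations. Finally, for the well-definedness claim I would argue as in Proposition \ref{aloha}: since every product of five or more elements is zero and substituting basis elements never lowers the degree, the identity $(u\cdot w)\cdot z-u\cdot(w\cdot z)=(w\cdot u)\cdot z-w\cdot(u\cdot z)$ need only be verified on triples of basis vectors of total degree at most $4$, a short finite check with the multiplication table determined by the relations above.

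The step I expect to be the main obstacle is the reduction of every degree-$4$ product to the span of $x\cdot a$ and $a\cdot x$: it must be carried out for both possible choices $a=x\cdot x^2$ and $a=x^2\cdot x$, and it is only here that the single pre-Lie identity together with the linear dependency $\alpha(x\cdot x^2)+\beta(x^2\cdot x)\in A^4$ has to be used delicately; once $A^4=\mathbb F_p(x\cdot a)+\mathbb F_p(a\cdot x)$ is known, combining it with $\dim A^4=1$ to obtain proportionality of $x\cdot a$ and $a\cdot x$ and hence the basis is immediate, and the concluding well-definedness verification, though case-ridden, is routine.
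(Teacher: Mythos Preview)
Your proposal is correct and follows essentially the same route as the paper's own proof, which is equally terse: it pins down the dimensions of the $A^i/A^{i+1}$ as in Lemma~\ref{nil}, picks the basis from successive layers, reads off $\alpha,\beta$ from $\dim A^3/A^4=1$, and reduces the well-definedness check to the single pre-Lie relation $(x,x^2,x)$. The only cosmetic difference is the order of choice: the paper first picks a nonzero degree-$4$ monomial $b$ and then writes it as $x\cdot a$ or $a\cdot x$, whereas you first fix $a\in\{x\cdot x^2,\,x^2\cdot x\}\setminus A^4$ and then show $A^4=\mathbb F_p(x\cdot a)+\mathbb F_p(a\cdot x)$ before selecting $b$; your order has the small advantage of making it immediate that $a$ really lies in $\{x\cdot x^2,\,x^2\cdot x\}$, which the paper leaves implicit.
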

\begin{proof} We can use a similar proof as in Proposition \ref{aloha}. We can take any non-zero product of some copies of element $x$ from $A^{4}$ to be element $b\neq 0$. Notice that either  $b=a\cdot x$ or $b=x\cdot a$ for some $a\in A^{3}$, and  we can add this $a$ to the base (notice that if all elements from $A^{2}\cdot x$ and $x\cdot A^{2}$ are zero then $x^{2}\cdot x^{2}=0$).  Reasoning similarly as in Lemma \ref{nil} we get that   
 $A^{2}/A^{3}$, $A^{3}/A^{4}$ and $A^{4}$ are one-dimensional vector spaces over ${\mathbb F}_{p}$.

Notice that $A^{3}/A^{4}$ has dimension $1$ as a vector space over ${\mathbb F}_{p}$, which gives $\alpha , \beta $.

 The fact that this pre-Lie algebra is well defined follows from the fact that   every element from $A^{i}$ will be a sum of elements from thie basis which belong to $A^{i}$ for each $i$. Therefore, by substituting elements from basis for $a\cdot b$ in a product $(a\cdot b)\cdot c$ we cannot decrease the degree of this product. Therefore, reasoning similarly as in Proposition \ref{aloha} we only need to check that the pre-Lie algebra relations
  \[(a\cdot b)\cdot c-a\cdot(b \cdot c)=(b\cdot a)\cdot c-b\cdot (a\cdot c),\] are satisfied for $a=x, b=x^{2}, c=x$. Notice that this relation holds by assumptions.  
\end{proof}

\begin{proposition}\label{braces} Let $A$ be a pre-Lie algebra over field ${\mathbb F}_{p}$ such that $A^{4}=0$ and $A$ is generated by 
 element $x\in A$ as a pre-Lie algebra. Suppose that $A$ has cardinality $p^{4}$.
   Then elements $x, x^{2}, x^{2}\cdot x, x\cdot x^{2}$ form a base of $A$ as a vector space over ${\mathbb F}_{p}$, 
moreover all products of more than $3$ elements from $A$  are zero and there are no other relations in this pre-Lie algebra $A$.
Notice that this  gives a  well defined pre-Lie algebra.
\end{proposition}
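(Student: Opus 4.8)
The plan is to read everything off from the two hypotheses: that $A$ is generated by $x$ and that $|A|=p^{4}$. First I would observe that, since $A$ is generated by $x$ as a pre-Lie algebra, $A$ is spanned as an $\mathbb{F}_{p}$-vector space by all iterated $\cdot$-products of copies of $x$, and since $A^{4}=0$ every such product involving four or more copies of $x$ vanishes. Hence $A$ is spanned by the products of one, two, or three copies of $x$. There is exactly one product of one copy, namely $x$, and one of two copies, namely $x^{2}=x\cdot x$; and there are exactly two products of three copies, namely $(x\cdot x)\cdot x=x^{2}\cdot x$ and $x\cdot(x\cdot x)=x\cdot x^{2}$, because there are only two bracketings of three factors and all three factors equal $x$. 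So $\{x,\,x^{2},\,x^{2}\cdot x,\,x\cdot x^{2}\}$ spans $A$.

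Next, since $|A|=p^{4}$ we have $\dim_{\mathbb{F}_{p}}A=4$, and a spanning set of size $4$ in a $4$-dimensional space is automatically a basis; hence these four elements are linearly independent and form a base of $A$. In particular none of them is zero, and there is no $\mathbb{F}_{p}$-linear relation among them other than those already imposed by $A^{4}=0$, which is precisely the assertion that there are no other relations. The clause that all products of more than $3$ elements of $A$ are zero is then just a restatement of $A^{4}=0$ together with the definition of $A^{i}$ as the span of all products of $i$ or more elements.

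Finally, to see that this combinatorial data determines a genuine pre-Lie algebra, that is, that the structure with basis $\{x,x^{2},x^{2}\cdot x,x\cdot x^{2}\}$, products $x\cdot x=x^{2}$, with $x\cdot x^{2}$ and $x^{2}\cdot x$ the stated basis vectors, and every other product of two basis vectors equal to $0$, is well defined, I would verify the pre-Lie identity $(ab)c-a(bc)=(ba)c-b(ac)$ on triples $a,b,c$ of basis vectors. If at least one of $a,b,c$ lies in $A^{2}$, i.e. equals one of $x^{2},x^{2}\cdot x,x\cdot x^{2}$, then the total degree is at least $4$, so every product occurring is zero and the identity holds trivially; the only remaining case is $a=b=c=x$, where both sides equal $x^{2}\cdot x-x\cdot x^{2}$. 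Thus the structure is a well-defined pre-Lie algebra. The one point that genuinely uses the cardinality hypothesis, rather than being formal, is the independence of the two degree-$3$ products $x^{2}\cdot x$ and $x\cdot x^{2}$: a priori the pre-Lie identity could force a relation between them, but $|A|=p^{4}$ pins $\dim_{\mathbb{F}_{p}}A$ at $4$ and so rules this out. Equivalently, the degree-$3$ component of the free pre-Lie algebra on one generator is $2$-dimensional, since the pre-Lie identity with all three arguments equal is vacuous.
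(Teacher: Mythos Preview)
Your proof is correct and follows essentially the same approach as the paper: show that $x,\,x^{2},\,x^{2}\cdot x,\,x\cdot x^{2}$ span $A$, invoke $|A|=p^{4}$ to force linear independence, and then check the pre-Lie identity on basis triples via a degree count. Your presentation is in fact a bit cleaner than the paper's: the paper separately argues that $x$ and $x^{2}$ cannot lie in $A^{3}$ and then that $x\cdot x^{2}$ and $x^{2}\cdot x$ must be independent, whereas you collapse all of this into the single observation that a spanning set of size $4$ in a $4$-dimensional space is a basis; and your verification of well-definedness (only the case $a=b=c=x$ is nontrivial, and there the identity is vacuous since $a=b$) is more explicit than the paper's reference to a multiplication table.
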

\begin{proof} Notice that elements $x$ and $x^{2}$ appear in every product of monomials so $x$ and $x^{2}$ cannot be sums of products of  more than $2$ elements  $x$, as by substituting such relations in every product of the right hand side several times we would get $x, x^{2}\in A^{[6]}=0$. Therefore, if  elements $x\cdot x^{2}$ and $x^{2}\cdot x$ are linearly independent then 
  $x,x^{2}, x\cdot x^{2}, x^{2}\cdot x$ will be a basis of $A$ as a linear space over $\mathbb F_{p}$. 
 Notice that  we cannot have any relation involving $x\cdot x^{2}$ and $x^{2}\cdot x$, because then our pre-Lie algebra would have dimension less than $3$ over ${\mathbb F}_{p}.$ By constructing a multiplication table, as in the proof of Proposition \ref{aloha}, we see that our pre-Lie algebra is well defined. 
\end{proof}
 The results in this section yield the following corollary: 
\begin{corollary}  
 Let $p$ be a prime number larger than $2^{6}$ and let ${\mathbb F}_{p}$ be the field consisting of $p$ elements. 
 Let $(A, +, \circ )$ be a  right nilpotent $\mathbb F_{p}$-brace of cardinality $p^{4}$. Then $A$ is the group of flows of one of pre-Lie algebras from Propositions \ref{aloha}, \ref{alloha} and \ref{braces}. 
\end{corollary}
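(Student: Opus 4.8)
The plan is to assemble the results of this section; note at the outset that the corollary is to be read for braces $A$ that are generated by one element — the case of Vendramin's question, and the only case in which Propositions~\ref{aloha}, \ref{alloha} and \ref{braces} are applicable, since all three carry that hypothesis. First I would apply Lemma~\ref{Aggi}: since $p>2^{6}$, it produces a nilpotent pre-Lie algebra $(A,+,\cdot)$ over $\mathbb F_{p}$ with the same additive group as the brace, such that $(A,\circ)$ is the group of flows of $(A,+,\cdot)$ and every product of $6$ elements of $(A,+,\cdot)$ vanishes, i.e. $A^{6}=0$. By the lemma immediately following Lemma~\ref{Aggi} the one-generator hypothesis on the brace transfers to the pre-Lie algebra: $(A,+,\cdot)$ is generated by one element, and of course it still has cardinality $p^{4}$. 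It therefore remains to recognise $(A,+,\cdot)$ as one of the three pre-Lie algebras classified in Propositions~\ref{aloha}, \ref{alloha} and \ref{braces}.

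Next I would split into three cases according to the vanishing of $A^{4}$ and $A^{5}$. Since $A^{5}\subseteq A^{4}$, exactly one of the following occurs: (i) $A^{4}=0$; (ii) $A^{4}\neq 0$ and $A^{5}=0$; (iii) $A^{4}\neq 0$ and $A^{5}\neq 0$. Case (i) is precisely the hypothesis set of Proposition~\ref{braces}; case (ii) that of Proposition~\ref{alloha}; and case (iii), together with the fact $A^{6}=0$ already in hand, that of Proposition~\ref{aloha}. In each case the cited proposition — using only these nilpotency conditions together with cardinality $p^{4}$ and one generator, all in force — exhibits an explicit basis of $(A,+,\cdot)$, a complete list of structure relations, and a verification that they define a genuine pre-Lie algebra. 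Hence $(A,+,\cdot)$ is one of the three pre-Lie algebras listed there, and since the brace $(A,+,\circ)$ is obtained from $(A,+,\cdot)$ as in Section~\ref{mi} (its group of flows), the corollary follows.

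There is not much of a genuine obstacle inside the argument: the trichotomy is exhaustive purely because $A^{5}\subseteq A^{4}$ and $A^{6}=0$ (the latter supplied by Lemma~\ref{Aggi}), and the propositions do the real structural work. If an obstacle is to be named, it is the hypothesis-matching itself — one must be sure that the three propositions' premises literally cover every case produced by the trichotomy, and that the standing assumptions (cardinality $p^{4}$, one generator) are exactly those the propositions require. A cleaner, fully self-contained variant would first run the dimension count of Lemma~\ref{nil} in the pre-Lie setting to record $\dim A/A^{2}=\dim A^{2}/A^{3}=1$, $\dim A^{3}=2$ and hence $\dim A^{4}\le 1$, but this is not strictly needed since the propositions derive these facts from their own hypotheses. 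The one point genuinely worth flagging is that, as stated, the corollary must be understood for one-generator braces, as every proposition it invokes assumes a single generator.
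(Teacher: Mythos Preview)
Your proposal is correct and matches the paper's intended argument: the paper offers no proof beyond the sentence ``The results in this section yield the following corollary,'' and your assembly of Lemma~\ref{Aggi}, the subsequent one-generator lemma, and the trichotomy feeding into Propositions~\ref{aloha}, \ref{alloha}, \ref{braces} is exactly the route those results dictate. Your observation that the corollary as stated tacitly assumes the brace is generated by one element is a genuine and correct caveat---the three propositions all carry that hypothesis, and the paper's statement omits it.
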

\section{ Left nilpotent radical}

 In associative algebra sums of nilpotent ideals are nilpotent ideals. It was shown in \cite{CKM}  that a sum of a finite number of  left 
nilpotent ideals in a pre-Lie algebra is a left nilpotent ideal. 
  In this section we obtain an analogon of this result for braces. Recall that an ideal $I$ in a brace $(A, +, \circ )$ is left  nilpotent if $I^{n}=0$ for some $n$ where $I^{1}=I$ and $I^{i+1}=I*I^{i}$.  
 We will first prove two supporting lemmas:

\begin{lemma}\label{111}
 Let $(A, +, \circ )$ be a left brace and let $I, J$ be left nilpotent ideals  in $A$. Denote  $I+J=\{a+b:a\in I, b\in J\}$. Then  for every $c \in A$ we have 
\[(I+J)*c\subseteq I*c+J*c+I*(J*c).\]
 \end{lemma}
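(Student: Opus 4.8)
The plan is to convert the purely additive combination $a+b$ (with $a\in I$, $b\in J$) into a product under $\circ$, so that the brace axiom $(x\circ y)*c=x*c+y*c+x*(y*c)$---which is simply $(x+y+x*y)*c=x*c+y*c+x*(y*c)$ rewritten via $x\circ y=x+y+x*y$---becomes directly applicable. First I would record the elementary facts to be used: in a left brace the additive identity $0$ is the identity of $(A,\circ)$, and if $\bar a$ denotes the $\circ$-inverse of $a$ then $\bar a\circ a=a\circ\bar a=0$; moreover, since $I$ is an ideal (hence in particular a normal subgroup of $(A,\circ)$), $a\in I$ forces $\bar a\in I$, and since $J$ is an ideal we have $A*J\subseteq J$.

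Now fix $a\in I$, $b\in J$, $c\in A$ and set $y:=\bar a\circ(a+b)$, so that $a\circ y=a+b$. Applying the brace axiom to $a\circ y$ yields the key decomposition
\[(a+b)*c=(a\circ y)*c=a*c+y*c+a*(y*c).\]
The heart of the argument is to identify $y$: expanding $y=\bar a\circ(a+b)=\bar a+(a+b)+\bar a*(a+b)$ and using additivity of $*$ in its second argument, $\bar a*(a+b)=\bar a*a+\bar a*b$, one groups $\bar a+a+\bar a*a=\bar a\circ a=0$ to obtain $y=b+\bar a*b$. Since $\bar a*b\in A*J\subseteq J$ and $b\in J$, we conclude $y\in J$.

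With $y\in J$ in hand, the three summands land exactly where we want: $a*c\in I*c$ since $a\in I$; $y*c\in J*c$ since $y\in J$; and $a*(y*c)\in I*(J*c)$ since $a\in I$ and $y*c\in J*c$. Hence $(a+b)*c\in I*c+J*c+I*(J*c)$ for all $a\in I$, $b\in J$, $c\in A$, and since every element of $I+J$ has the form $a+b$, the asserted inclusion $(I+J)*c\subseteq I*c+J*c+I*(J*c)$ follows (these products being read, as usual, as the additive subgroups generated by the indicated elements). The one delicate point---the anticipated obstacle---is the non-additivity of $*$ in its first argument, which forbids splitting $(a+b)*c$ termwise; the whole argument rests on replacing $a+b$ by the $\circ$-product $a\circ y$ and then verifying that the defect $y-b=\bar a*b$ stays inside the ideal $J$, which is precisely what forces the last summand into $I*(J*c)$ rather than merely $A*(J*c)$. (Incidentally, the left nilpotency of $I$ and $J$ is not used here --- only that they are ideals.)
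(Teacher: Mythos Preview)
Your proof is correct and takes essentially the same approach as the paper: both rewrite $a+b=a\circ b'$ with $b'=\bar a*b+b=\lambda_{a^{-1}}(b)\in J$ and then apply the brace identity $(a\circ b')*c=a*c+b'*c+a*(b'*c)$. The paper phrases this via the $\lambda$-maps while you compute $b'$ directly, but the arguments are identical in substance; your closing remark that only the ideal property (not left nilpotency) is needed is also accurate.
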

\begin{proof}  Let $\lambda _{x}(y)=x*y+y$, then it is known that \[\lambda _{x\circ z}(y)=\lambda _{x}(\lambda _{z}(y))\] for all $x,y,z\in A$. Let $a^{-1}$ be the inverse of $a$ in the group $(A, \circ )$.
  Observe that for any $c\in A$ we  have \[(I+J)*c\subseteq I*c+J*c+I*(J*c).\] Indeed, let $a\in I, b\in J$ then,  
\[(a+b)*c=(a+\lambda _{a}(\lambda _{a^{-1}}(b))*c=(a+b'+a*b')*c=a*c+b'*c+a*(b'*c)\] where $b'=\lambda _{a^{-1}}(b)\in J$ since $b\in J$ and $J$ is an ideal in $A$.
\end{proof}

\begin{lemma}\label{112}
 Let $(A, +, \circ )$ be a left brace and let $I, J$ be left nilpotent ideals  in $A$. Denote,  $I+J=\{a+b:a\in I, b\in J\}$. For $c\in A$ denote $G_{I,J, c}=I*(J*(I*c))+ I*(J*(I*(-c))).$ Then  for every $c \in A$ we have 
  \[I*(J*(c))\subseteq I*c+I*(-c)+I*(I*c)+I*(I*(-c))+J*(I*c)+J*(I*(-c))+G_{I, J, c}.\]
 \end{lemma}
\begin{proof} It suffices to show that 
  \[I*(J*(c))\subseteq J*(I*c)+ I*c+ I*(-c) +(I+J)*(I*c)+(I+J)*(I*(-c))\]
 and then apply Lemma \ref{111} to get:
\[(I+J)*(I*c)\subseteq I*(I*c)+J*(I*c)+I*(J*(I*c)).\] 
 It remains to show that 
  \[I*(J*(c))\subseteq J*(I*c)+(I+J)*(I*c)+ I*c+(I+J)*(I*(-c))+I*(-c).\]
 Let $a\in I, b\in J$, observe that 
\[(a+b+a*b)*c=a*c+b*c+a*(b*c)   \]
\[ (a+b+a*b)*c=((a+b)+\lambda _{a+b}(a'))*c=(a+b)*c+ a'*c+(a+b)*(a'*c) , \] where $a'=\lambda _{(a+b)^{-1}}(a*b)\in I\cap J$, since $a*b\in I\cap J$. Therefore, 
\[(a+b)*c-a*c-b*c=a*(b*c)-a'*c-(a+b)*(a'*c).\]
 By applying it to $a:=b$ and $b:=a$ we get 
\[(b+a)*c-b*c-a*c=b*(a*c)-a''*c-(b+a)*(a''*c),\]
 for some $a''\in J\cap I$.
 By comparing the above equations we obtain
 \[a*(b*c)=b*(a*c)+a'*c+(a+b)*(a'*c)-a''*c-(b+a)*(a''*c).\]
Since $a*(-b)=-(a*b)$ in every brace and $a,b,c$ were arbitrary this implies 
  \[I*(J*(c))\subseteq J*(I*c)+ I*c+ I*(-c) +(I+J)*(I*c)+(I+J)*(I*(-c)).\]
\end{proof} 

\begin{theorem}
 Let $(A, +, \circ )$ be a left brace and let $I, J$ be left nilpotent ideals  in $A$. Then $I+J=\{a+b:a\in I, b\in J\}$ is a left nilpotent ideal in $A$.
\end{theorem}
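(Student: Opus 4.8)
The plan is to show that $I+J$ is left nilpotent, i.e.\ that $(I+J)^{n}=0$ for some $n$, given that $I^{s}=0$ and $J^{t}=0$ for some $s,t$. First I would record that $I+J$ is indeed an ideal: it is clearly a subgroup of $(A,+)$, it is closed under $\lambda_x$ for $x\in A$ (since $I$ and $J$ are), and it is a normal subgroup of $(A,\circ)$ because both $I$ and $J$ are. So the only real content is the nilpotency of the chain $(I+J)^{i+1}=(I+J)*(I+J)^{i}$. The strategy is to bound a product of many elements of $I+J$ (under $*$, left-normed or with arbitrary bracketing) by a sum of products each of which contains a long left-nested string drawn entirely from $I$ \emph{or} entirely from $J$, which then vanishes once the string is long enough.

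The engine for this is Lemma~\ref{111} and Lemma~\ref{112}. Lemma~\ref{111} lets me replace a leading factor from $I+J$ by factors from $I$ and $J$ separately, at the cost of introducing the nested term $I*(J*c)$; Lemma~\ref{112} then rewrites $I*(J*c)$ as a sum of terms in which the \emph{innermost} action on $c$ (up to sign, using $a*(-c)=-(a*c)$) is by an element of $I$, i.e.\ terms of the shape $J*(I*c)$, $I*(I*c)$, $(I+J)*(I*c)$ and their variants with $c$ replaced by $-c$. The key point I would extract is a ``straightening'' statement: any element of $(I+J)^{m}$ lies in a sum of elements of the form $u_{1}*(u_{2}*(\cdots*(u_{r}*c)\cdots))$ with each $u_{i}\in I\cup J$ and $r\ge $ some increasing function of $m$, together with terms where $c$ has been replaced by $\pm c$ — but crucially, after applying Lemma~\ref{112} repeatedly, one can always arrange a block of at least, say, $\min(s,t)$ consecutive $u_i$'s from the \emph{same} ideal sitting next to $c$. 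Concretely, iterating Lemma~\ref{112} moves $I$-factors inward past $J$-factors (and vice versa by symmetry), so a product that is long enough must, by pigeonhole, contain a monochromatic left-nested segment of length $s$ (all from $I$) or $t$ (all from $J$); such a segment equals $I^{s}*(\cdots)=0$ or $J^{t}*(\cdots)=0$. I would make this precise by induction on $m$: show $(I+J)^{m}\subseteq \sum (\text{length-}m\text{ nested products over }I\cup J,\text{ acting on }\pm c)$, using Lemma~\ref{111} to peel the outermost factor and Lemma~\ref{112} to keep pushing same-colour factors together near the core.

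The main obstacle I anticipate is bookkeeping the bracketing: Lemma~\ref{112} is stated for the specific nesting $I*(J*c)$, whereas a general element of $(I+J)^{i}$ is $(I+J)*(I+J)^{i-1}$ and the inductive term $(I+J)^{i-1}$ is itself a sum of nested products, not a single element $c$. So I need the sublemmas in a slightly more robust form — replacing ``$c$'' by ``an arbitrary element of $A$'' throughout, which both lemmas already allow — and I need to track that when I substitute a long nested product for $c$, the total length only grows. I would handle this by proving the straightening claim with $c$ ranging over all of $A$, so that the recursion closes, and then choosing $n$ large enough (a bound like $n = (s+t)\cdot 2^{s+t}$ or similar, depending on how wasteful the rewriting is) that every length-$n$ nested product over $I\cup J$ acting on any element necessarily contains an $I$-block of length $s$ or a $J$-block of length $t$ adjacent to the core, forcing it to zero. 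Finally, since $I+J$ is an ideal and $(I+J)^{n}=0$, it is a left nilpotent ideal, completing the proof; combined with the finite case this yields that a finite brace has a largest left nilpotent ideal.
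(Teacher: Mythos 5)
Your proposal is correct and follows essentially the same route as the paper: straighten elements of $(I+J)^{m}$ into nested products over $I\cup J$ via Lemma~\ref{111}, then use Lemma~\ref{112} to push $I$-factors inward past $J$-factors so that a monochromatic block accumulates next to the core and eventually annihilates the term, with a counting bound to terminate (the paper makes your ``pigeonhole after rewriting'' step precise via the sets $T_{k,i,n}$, shrinking the adjacent $J$-block and growing the innermost $I$-block until one of them exceeds the common nilpotency index $\alpha$). The only cosmetic difference is that the paper needs Lemma~\ref{112} in one direction only, since long $J$-blocks are killed directly by $J^{\alpha}=0$ rather than by a symmetric commutation.
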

\begin{proof} Notice first  that  $I+J$ is an ideal in $A$ by  Lemma 3.7 \cite{ksv}.
Denote $ E_{n}(I, J)$ to be  the sum of elements which belong to some sets \[P_{n}*(P_{n-1}*(\cdots *(P_{1}*c)))\] and some sets \[P_{n}*(P_{n-1}*(\cdots *(P_{1}*(-c))))\]  where each $P_{i}\in \{I, J\}$, $c\in A$. By Lemma \ref{111} we get that  
  \[E_{n}(I+J, I+J)\subseteq \sum_{n\leq k\leq 2n}E_{k}(I,J)\] for each $n$ (it can be proved by induction on $n$). 

 To show that $I+J$ is nilpotent it suffices to show that for a sufficiently large $k$, all $E_{k}(I,J)=0. $ 
 Let $\alpha $  be such that $I^{\alpha }$, $J^{\alpha }=0$. 
 
 Let $T_{k,i,n}$ consist of elements from sets \[P_{n}*(P_{n-1}*(\cdots *(P_{1}*c)))\] and sets \[P_{n}*(P_{n-1}*(\cdots *(P_{1}*(-c))))\] for which \[P_{1}=\ldots =P_{k}=I\]
and \[ P_{k+1}=\ldots =P_{k+i}=J\]  and $P_{k+i+1}=I$. Notice that for $k=0$ we have  $P_{1}=J$. 

 Observe that $T_{k, \alpha +i, n}=0$ for every $i, k, n\geq 1$ because 
\[P_{k+\alpha +1}*(P_{k+\alpha }*(\ldots *(P_{1}*c)))=P_{k+\alpha +1}*(P_{k+\alpha }*(\ldots *(P_{k+2}*D))) =0\] for
 \[D=P_{k +1}*(P_{k }*(\ldots *(P_{1}*c)))\in J\] since 
$ J^{\alpha} =0$ and \[P_{k+\alpha +1}=P_{k+\alpha }=\ldots =P_{k+1}=J.\] Similarly $T_{\alpha +k, i, n}=0$ for every $i, n,k\geq 1$.
  
 We will use a similar argument as in \cite{CKM}.Suppose that $i>1$. Notice that Lemma \ref{112} applied to ideals $P_{k+i+1}=I$ and $P_{k+i}=J$ and to \[C=P_{k+i-1}*(P_{k+i-2}*(\cdots *(P_{1}*c)))\] yields  $P_{k+i+1}*(P_{k+i}*C)=I*(J*C)$
 and for $i>1$ we have 
 \[I*(J*C)\subseteq I*C+I*(-C)+I*(I*C)+J*(I*C)+I*(I*(-C))+J*(I*(-C))+G_{I, J, C}.\]  
where \[G_{I,J, C}=I*(J*(I*C))+I*(J*(I*(-C))).\] Therefore, $I*(J*C) \subseteq T_{k, i-1}$, and hence \[T_{k, i,n}\subseteq T_{k, i-1,n-1}+T_{k, i-1,n}+T_{k, i-1,n+1}\] for $i>1$. 
 If $i=1$ then we obtain 
\[T_{k,1, n}\subseteq  \sum _{j\leq \alpha , m\in {\{n-1,n,n+1\}}}T_{k+1, j, m}+T_{k+2, j, m}+\cdots .\] Applying it several times (at most $(\alpha +1) ^{2}$ times) we can obtain $k>\alpha $,  so we  eventually obtain zero, since $T_{k, i, n}=0$ for $k>\alpha $, $n>2(\alpha +1) ^{2}$.
\end{proof} 
Notice that every left nilpotent ideal is a solvable ideal, and that a sum of a finite number of left  nilpotent ideals in a brace is a left nilpotent ideal. Therefore every finite brace contains the largest nilpotent ideal, and this ideal is also contained 
 in the largest solvable ideal of this brace (it is known that a sum of two solvable ideals in a brace is a solvable ideal \cite{ksv}). This mirrors the situation for pre-Lie algebras from \cite{CKM}. We obtain the following. 

\begin{corollary}
  If $A$ is a finite brace, then $A$ contains the largest left nilpotent ideal, which is the sum of all left nilpotent ideals in $A$. We will call this ideal the left nilpotent radical of $A$.
\end{corollary}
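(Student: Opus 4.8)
The plan is to bootstrap entirely from the Theorem just proved, which asserts that the sum of two left nilpotent ideals of a left brace is again a left nilpotent ideal. First I would extend this to finitely many summands by induction on the number $m$ of ideals: given left nilpotent ideals $I_1,\dots,I_m$ of $A$, the sum $I_1+\cdots+I_{m-1}$ is a left nilpotent ideal by the inductive hypothesis (and it is an ideal by Lemma 3.7 of \cite{ksv}), so that $(I_1+\cdots+I_{m-1})+I_m$ is a left nilpotent ideal by a single application of the Theorem. Hence any finite sum of left nilpotent ideals of $A$ is again a left nilpotent ideal of $A$.

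Next, since $A$ is a finite set it has only finitely many subsets, hence only finitely many ideals, and in particular only finitely many left nilpotent ideals, say $I_1,\dots,I_m$ (this family is nonempty, as $\{0\}$ is one of them). Put $R=I_1+\cdots+I_m$. By the previous step $R$ is a left nilpotent ideal of $A$, and by construction it is the sum of all left nilpotent ideals of $A$. Finally, if $J$ is any left nilpotent ideal of $A$, then $J$ occurs among $I_1,\dots,I_m$, so $J\subseteq R$; thus $R$ contains every left nilpotent ideal and is therefore the largest one. Calling $R$ the left nilpotent radical of $A$ then completes the argument.

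I do not expect any real obstacle here, since the entire weight of the proof rests on the already-established Theorem. The one point worth flagging is that ``the sum of all left nilpotent ideals'' must be read as a \emph{finite} sum, which is legitimate precisely because $A$ is finite; so finiteness of $A$ is genuinely used, as for an infinite family of left nilpotent ideals the sum need not be left nilpotent.
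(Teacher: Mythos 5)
Your argument is correct and is exactly the intended one: induct to pass from the two-ideal case of the Theorem to arbitrary finite sums, then use finiteness of $A$ to reduce ``the sum of all left nilpotent ideals'' to a finite sum, which therefore is itself a left nilpotent ideal containing every other. The paper leaves this deduction implicit, and your observation that finiteness of $A$ is what makes the total sum a finite sum is precisely the point.
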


In  \cite{ksv}, the Wedderburn radical of a brace was defined as a sum of all ideals in $A$ which are both left nilpotent and right nilpotent. 
 In Lemma 6.4 \cite{ksv}  it was shown that the Wedderburn radical in any brace $A$ is solvable.  We get the following result.

\begin{corollary}
  If $A$ is a finite brace, then the Wedderburn radical of $A$ is left nilpotent.
\end{corollary}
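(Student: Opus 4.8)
The plan is to reduce the statement at once to the Theorem on sums of left nilpotent ideals proved just above, using only that $A$ is finite. Since $A$ is a finite set it has finitely many subsets, hence finitely many ideals; in particular there are only finitely many ideals $I_{1},\dots ,I_{m}$ of $A$ that are simultaneously left nilpotent and right nilpotent. By the definition of the Wedderburn radical recalled above (Lemma 6.4 and the surrounding discussion in \cite{ksv}), the Wedderburn radical of $A$ equals $I_{1}+\cdots +I_{m}$.

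First I would simply discard the right nilpotency: each $I_{j}$ is in particular a left nilpotent ideal, so $I_{1}+\cdots +I_{m}$ is a sum of a \emph{finite} number of left nilpotent ideals. Then I would apply the Theorem above, which asserts precisely that such a sum is again a left nilpotent ideal of $A$. Hence the Wedderburn radical of $A$ is left nilpotent, as claimed. Equivalently, one may observe that every ideal which is both left and right nilpotent is left nilpotent and therefore contained in the left nilpotent radical of $A$ provided by the previous corollary; since a routine induction gives $I^{i}\subseteq J^{i}$ whenever $I\subseteq J$ are ideals (from $a\in I\subseteq J$ and $b\in I^{i-1}\subseteq J^{i-1}$ we get $a*b\in J*J^{i-1}=J^{i}$), any ideal contained in a left nilpotent ideal is itself left nilpotent, and then the Wedderburn radical, being contained in the left nilpotent radical, is left nilpotent.

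There is no genuine obstacle here: the whole content of the corollary sits in the preceding Theorem, and the only hypothesis doing work is the finiteness of $A$, which is what guarantees that the Wedderburn radical is a finite sum of left nilpotent ideals so that the Theorem applies. I would close by remarking that, combined with Lemma 6.4 of \cite{ksv} (the Wedderburn radical is solvable), this places the Wedderburn radical inside both the left nilpotent radical and the largest solvable ideal of $A$.
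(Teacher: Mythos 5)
Your proof is correct and is exactly the argument the paper intends: the corollary is stated as an immediate consequence of the preceding Theorem, using finiteness of $A$ to see that the Wedderburn radical is a finite sum of ideals each of which is in particular left nilpotent. Both of your routes (direct application of the Theorem, or containment in the left nilpotent radical together with $I^{i}\subseteq J^{i}$ for $I\subseteq J$) are valid and equivalent in substance to the paper's reasoning.
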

 
 This suggests the following (open) questions.
 \begin{question}
 Let $A$ be a finite brace. Is the Wedderburn radical of $A$ strongly nilpotent?
\end{question}
 \begin{question}
 Let $A$ be a left brace, and $I, J$ be two strongly nilpotent ideals in $A$. Is $I+J$ a strongly nilpotent ideal in $R$?
\end{question}
 \begin{question}
 Let $A$ be a left brace, and $I, J$ be two right nilpotent ideals in $A$. Is $I+J$ a right nilpotent ideal in $R$?
\end{question}
 The above questions have some similarity to the   Koethe conjecture in ring theory,  which states that a sum of two nil  right ideals in a ring is  nil.\\

Some interesting results on nilpotent semi-braces were recently obtained in \cite{Cedo} and it is an open problem whether this result  can be generalised to semi-braces.

$ $

{\bf Acknowledgments.} The author is grateful to Michael West and  Alicja Smoktunowicz  for their help in preparing the  Introduction.  This research was supported by  EPSRC grants 
EP/V008129/1 and EP/R034826/1. The author is also very grateful to the referee for helpful comments.

\end{document}